\newtheorem{thm}{Theorem}[section]
\newtheorem{lem}[thm]{Lemma}
\newtheorem{cor}[thm]{Corollary}
\newtheorem{exa}[thm]{Example}
\theoremstyle{remark}
\newcommand{\F}{\mathcal{F}}
\newcommand{\D}{\mathcal{D}}
\newcounter{fignum}
\begin{document}

\title{Banded surfaces, banded links, band indices and genera of links}

\author{Dongseok Kim}
\address{Department of Mathematics \\Kyonggi University
\\ Suwon, 443-760 Korea}
\email{dongseok@kgu.ac.kr}
\thanks{The first author was supported by Basic Science Research Program through the National Research Foundation of Korea(NRF) funded by the Ministry of Education, Science and Technology(2012R1A1A2006225).}

\author{Young Soo Kwon}
\address{Department of Mathematics \\Yeungnam University \\Kyongsan, 712-749, Korea}
\email{yskwon@yu.ac.kr}

\author{Jaeun Lee}
\address{Department of Mathematics \\Yeungnam University \\Kyongsan, 712-749, Korea}
\email{ysookwon@ynu.ac.kr}

\subjclass[2000]{57M25, 57M27}
\keywords{banded links, Seifert surfaces, banded surfaces, band indices, genera}

\begin{abstract}
Every link is shown to be presentable as a boundary of an unknotted flat banded surface.
A (flat) banded link is defined as a boundary of an unknotted (flat) banded surface.
A link's (flat) band index is defined as the minimum
number of bands required to present the link as boundaries of an unknotted (flat) banded surface.
Banded links of small (flat, respectively) band index are considered here.
Some upper bounds are provided for these band indices of a link using
braid representatives and canonical Seifert surfaces of the link. The relation between the band indices and genera of links
is studied and the band indices of pretzel knots are calculated.
\end{abstract}

\maketitle

\section{Introduction}

A classical goal in knot theory is to find a suitable representative or invariant
for links to establish their classification.
The closure of a braid in the classical Artin group is
a powerful tool~\cite{Birman:Braids, yamada}, though
it fails to achieve the original goal because of the lack
of control on the second Markov move. Representation
theory of quantum groups has allowed extension of links' polynomial
invariants, but still needs further explorations~\cite{Tu94, Witten:pathint}.

Milnor invariants led to a different representative ``the string link" which allowed the classification
of links with small numbers of components up to link homotopy~\cite{Levine, Milnor}.
String links can be defined in different ways~\cite{Milnor, Kauffman, Livingston, LTW}.
The definition employed here is closely related to the banded surfaces on page 191 of Kauffman's \emph{On knots}~\cite{Kauffman}.
Compact orientable surfaces are important in the study of links and $3$-manifolds.
Known as Seifert surface, these surfaces were first proven by Seifert
using an algorithm on a diagram of $L$ that would also take his name~\cite{Seifert:def}.
Some Seifert surfaces feature extra structures,
for example, Seifert surfaces obtained by plumbings annuli have shown
the fibreness of links and surfaces~\cite{Gabai:murasugi1, Gabai:murasugi2, harer:const, Nakamura,
Rudolph:quasipositive2, Rudolph:plumbing, Stallings:const}. Such plumbing surfaces
are well demonstrated~\cite{FHK:openbook, HW:plumbing, Kim:flatbasket}.
However, they have natural geometric restrictions
to avoid ambiguities in construction.
For geometrically flexible Seifert surfaces, \emph{(flat) banded surfaces} are considered here as given by Kauffman~\cite{Kauffman}.
A \emph{(flat) banded link} is defined here as the boundary of a (flat) banded surface.
The flexibility in the definition of banded links facilitates the finding of a banded surface for a given link.
However
for a flat banded surface, analysis of constructions is different.
This article sets out to show the existence of such a flat banded surface for every link.
A link's (flat) band index is defined as the minimum number of bands required to represent it by
a (flat, respectively) banded link. Band indices are then related to the genera of links.

This article is structured as follows. Precise definitions of banded surfaces, flat banded surfaces, banded links and band indices are presented in Section~\ref{def}. Section~\ref{string} demonstrates the existence of banded surfaces and flat banded surfaces. In Section~\ref{smallindex}, some examples of banded links are discussed and
classification theorems for links of small band index are presented.
Upper bounds for band indices are given in Section~\ref{bound}. In Section~\ref{relation}, we
explores the relations between these band indices and the
genera of links which provides some lower bounds for these band indices. The exact band indices for some pretzel knots
are then found.

\section{Preliminaries and definitions} \label{def}

Of the many definitions of string links~\cite{Milnor, Kauffman, Livingston, LTW},
Kauffman's definition of banded surfaces on page 191 of \emph{On knots}~\cite{Kauffman} is used here as a basis.
Let $M$ be a set of $2n$ distinct points on the $x$-axis and an \emph{n-string link}
$S$ is an ordered collection of $n$ disjoint unknotted arcs properly embedded $(\mathbb{R}^3 , M)$ in such a way,
the set of ends of the arcs is exactly $N$. The definition of
the $n$-string link used here is different from Kauffman's, which allows the possibility of knotted arcs~\cite{Kauffman}.
A \emph{framed string link} $(S, \mu)$ is defined with $S$ as an $n$-string link and
$\mu=(m_1$, $m_2$, $\ldots$, $m_n)$ as a framing on $S$. A $3$-string link and
a framed $3$-string link are illustrated in Figure~\ref{stringlink}.

\begin{figure}
$$
\begin{pspicture}[shift=-1.2](-2.2,-1.2)(2.2,2.1)
\pscircle[linewidth=3pt](-2,0){.1}
\pscircle[linewidth=3pt](2,0){.1}
\pscircle[linewidth=3pt](.4,0){.1}
\pscircle[linewidth=3pt](1.2,0){.1}
\pscircle[linewidth=3pt](-.4,0){.1}
\pscircle[linewidth=3pt](-1.2,0){.1}
\psarc[linewidth=1.2pt](-.8,0){1.2}{0}{26}
\psarc[linewidth=1.2pt](-.8,0){1.2}{34}{180}
\psarc[linewidth=1.2pt](1.6,.4){.5656854}{-45}{83}
\psarc[linewidth=1.2pt](1.6,.4){.5656854}{97}{225}
\pccurve[linewidth=1.2pt, angleA=90,angleB=-100](-1.2,0)(-.45,1.1)
\pccurve[linewidth=1.2pt, angleA=70,angleB=180](-.42,1.2)(.4,1.5)
\pccurve[linewidth=1.2pt, angleA=0,angleB=90](.4,1.5)(1.6,.9656854)
\pccurve[linewidth=1.2pt, angleA=-90,angleB=0](1.6,.9656854)(1.1,.4)
\pccurve[linewidth=1.2pt, angleA=180,angleB=-10](.95,.4)(.3,.6)
\pccurve[linewidth=1.2pt, angleA=170,angleB=90](.3,.6)(-.4,0)
\rput(0,-1){{$(a)$}}
\end{pspicture}
\quad\quad
\begin{pspicture}[shift=-1.2](-2.2,-1.2)(2.2,2.1)
\pscircle[linewidth=3pt](-2,0){.1}
\pscircle[linewidth=3pt](2,0){.1}
\pscircle[linewidth=3pt](.4,0){.1}
\pscircle[linewidth=3pt](1.2,0){.1}
\pscircle[linewidth=3pt](-.4,0){.1}
\pscircle[linewidth=3pt](-1.2,0){.1}
\psarc[linewidth=1.2pt](-.8,0){1.2}{0}{26}
\psarc[linewidth=1.2pt](-.8,0){1.2}{34}{180}
\psarc[linewidth=1.2pt](1.6,.4){.5656854}{-45}{83}
\psarc[linewidth=1.2pt](1.6,.4){.5656854}{97}{225}
\pccurve[linewidth=1.2pt, angleA=90,angleB=-100](-1.2,0)(-.45,1.1)
\pccurve[linewidth=1.2pt, angleA=70,angleB=180](-.42,1.2)(.4,1.5)
\pccurve[linewidth=1.2pt, angleA=0,angleB=90](.4,1.5)(1.6,.9656854)
\pccurve[linewidth=1.2pt, angleA=-90,angleB=0](1.6,.9656854)(1.1,.4)
\pccurve[linewidth=1.2pt, angleA=180,angleB=-10](.95,.4)(.3,.6)
\pccurve[linewidth=1.2pt, angleA=170,angleB=90](.3,.6)(-.4,0)
\rput(-2,-.4){{$4$}} \rput(-1.2,-.4){{$-2$}} \rput(1.2,-.4){{$0$}}
\rput(0,-1){{$(b)$}}
\end{pspicture}
$$
\caption{$(a)$ a $3$-string link and $(b)$ a framed $3$-string link.} \label{stringlink}
\end{figure}
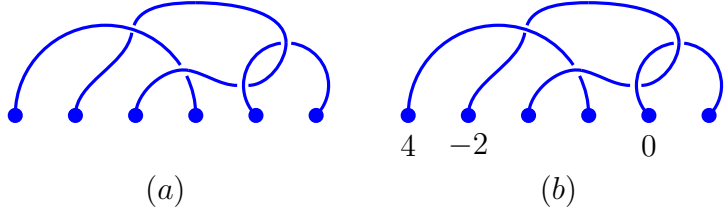

\begin{figure}
$$
\begin{pspicture}[shift=-1.3](-1.4,-1.7)(1.4,2.1)
\pscircle[linewidth=3pt](.4,0){.1}
\pscircle[linewidth=3pt](1.2,0){.1}
\pscircle[linewidth=3pt](-1.2,0){.1}
\pscircle[linewidth=3pt](-.4,0){.1}
\pccurve[linewidth=1.2pt, angleA=60,angleB=-120](-1.2,0)(-.84,.54)
\pccurve[linewidth=1.2pt, angleA=60,angleB=0](-.76,.66)(-.75,1.45)
\pccurve[linewidth=1.2pt, angleA=180,angleB=120](-.85,1.45)(-.8,.6)
\pccurve[linewidth=1.2pt, angleA=-60,angleB=150](-.8,.6)(-.05,.33)
\pccurve[linewidth=1.2pt, angleA=-30,angleB=120](.05,.27)(.4,0)
\pccurve[linewidth=1.2pt, angleA=60,angleB=-150](-.4,0)(0,.3)
\pccurve[linewidth=1.2pt, angleA=30,angleB=-120](0,.3)(.36,.54)
\pccurve[linewidth=1.2pt, angleA=60,angleB=-60](.44,.66)(.42,1.4)
\pccurve[linewidth=1.2pt, angleA=120,angleB=0](.37,1.5)(0,2)
\pccurve[linewidth=1.2pt, angleA=180,angleB=90](0,2)(-.8,1.45)
\pccurve[linewidth=1.2pt, angleA=-90,angleB=170](-.8,1.45)(-.65,1.125)
\pccurve[linewidth=1.2pt, angleA=-10,angleB=-170](-.55,1.1)(.45,1.1)
\pccurve[linewidth=1.2pt, angleA=10,angleB=-90](.57,1.13)(1.1,1.5)
\pccurve[linewidth=1.2pt, angleA=90,angleB=0](1.1,1.5)(.9,1.8)
\pccurve[linewidth=1.2pt, angleA=180,angleB=50](.9,1.8)(.4,1.45)
\pccurve[linewidth=1.2pt, angleA=-130,angleB=70](.4,1.45)(.21,1.13)
\pccurve[linewidth=1.2pt, angleA=-110,angleB=135](.155,1.0)(.4,.6)
\pccurve[linewidth=1.2pt, angleA=-45,angleB=120](.4,.6)(1.2,0)
\rput(0,-1.5){{$(a)$}}
\end{pspicture} \quad\quad
\begin{pspicture}[shift=-1.3](-2.2,-1.7)(2.2,2.1)
\pccurve[doubleline=true, angleA=90,angleB=-120](-1.2,0)(-.7,.8)
\pccurve[doubleline=true, angleA=60,angleB=0](-.7,.8)(-.7,1.45)
\pccurve[doubleline=true, angleA=180,angleB=0](-.7,1.45)(-.9,1.45)
\pccurve[doubleline=true, angleA=180,angleB=120](-.9,1.45)(-.8,.6)
\pccurve[doubleline=true, angleA=-60,angleB=150](-.8,.6)(0,.3)
\pccurve[doubleline=true, angleA=-30,angleB=90](0,.3)(.4,0)
\pccurve[doubleline=true, angleA=90,angleB=-150](-.4,0)(0,.3)
\pccurve[doubleline=true, angleA=30,angleB=-120](0,.3)(.5,.7)
\pccurve[doubleline=true, angleA=60,angleB=-90](.5,.7)(.6,1.3)
\pccurve[doubleline=true, angleA=90,angleB=-60](.6,1.3)(.4,1.8)
\pccurve[doubleline=true, angleA=120,angleB=0](.4,1.8)(0,2)
\pccurve[doubleline=true, angleA=180,angleB=90](0,2)(-.8,1.45)
\pccurve[doubleline=true, angleA=-90,angleB=170](-.8,1.45)(-.6,1.1)
\pccurve[doubleline=true, angleA=-10,angleB=180](-.6,1.1)(0,1.05)
\pccurve[doubleline=true, angleA=0,angleB=-160](0,1.05)(.8,1.2)
\pccurve[doubleline=true, angleA=20,angleB=-90](.8,1.2)(1.1,1.5)
\pccurve[doubleline=true, angleA=90,angleB=0](1.1,1.5)(.9,1.8)
\pccurve[doubleline=true, angleA=180,angleB=50](.9,1.8)(.4,1.45)
\pccurve[doubleline=true, angleA=-130,angleB=70](.4,1.45)(.2,1)
\pccurve[doubleline=true, angleA=-110,angleB=135](.2,1)(.4,.6)
\pccurve[doubleline=true, angleA=-45,angleB=90](.4,.6)(1.2,0)
\pccurve[doubleline=true, angleA=60,angleB=0](-.7,.8)(-.7,1.45)
\pccurve[doubleline=true, angleA=0,angleB=-160](0,1.05)(.8,1.2)
\pccurve[doubleline=true, angleA=60,angleB=-90](.5,.7)(.6,1.3)
\psframe[linecolor=white, fillstyle=solid, fillcolor=white](-2,-1)(2,.1)
\psline(-1.215,.1)(-1.6,.1)(-1.6,-1)(1.6,-1)(1.6,.1)(1.217,.1)
\psline(-1.156,.1)(-.395,.1) \psline(-.317,.1)(.318,.1) \psline(1.148,.1)(.4,.1)
\rput(0,-1.5){{$(b)$}}
\rput(-.5,-.5){{$\mathcal{D}$}}
\rput(.5,-.5){{$+$}}
\end{pspicture}
$$
\caption{$(a)$ A $2$-band and $(b)$ its banded surface with a blackboard framing.} \label{bandsurface}
\end{figure}
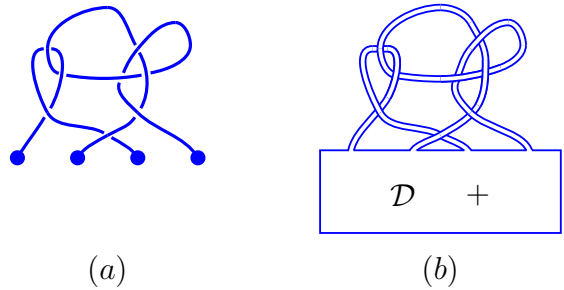

Kauffman obtained a \emph{banded surface} $B$ from an $n$-string link $S$ by attaching a disc $\D$ with a blackboard framing as depicted in Figure~\ref{bandsurface} $(b)$.
Similarly, a framed banded surface is produced here by replacing each arc by a band (each arc in the middle of the band is called a \emph{core}) where a prefixed framing on the band represents $m_i$ full twists; precisely,
$m_i$ is the linking number between a closed path $\alpha$ which is a path sum of the \emph{core} of the $i$-th band of $S$ and any path joining both ends of the core in
$\D$ and its push up $\alpha^{+}$ towards to the positive normal direction
(as indicated ``$+$" in Figure~\ref{bandsurface}).
The linking number discussed here
does not depend on the choice of path on $\D$.
The boundary of an $n$-banded surface has at most $n+1$ components, and the number of components is always congruent to $n+1$ modulo $2$.
In particular, if all framings of an $n$-banded surface
are zero, it is called a \emph{flat $n$-banded surface}.
Further, all bands in these banded surfaces may be linked but not knotted because
they have come from string links. If bands are allowed to be knotted, then \emph{knotted banded surface} results.
Although, banded surfaces are easily constructible for
links, proving the existence of a flat banded surface of a given link is not simple. The existence of
flat banded surfaces from knotted banded surfaces is provided in Theorem~\ref{existencethm}, from closed braids in Theorem~\ref{flatstringthm1} and from canonical Seifert surfaces in Theorem~\ref{flatstringthm3}.
From this, definitions of the band index and the flat band index can follow.
The \emph{band index} of a link $L$, denoted by $B(L)$ is $n$ if there exists an $n$-banded surface $\F$ which is obtained
from an $n$-string link $S$ such that the boundary of the surface $\F$ is $L$ but there does not exist any $k$-banded surface which is obtained from a $k$-string link $T$ for any  positive integer $k$ less than $n$. Similarly the \emph{flat band index} of a link $L$, denoted by $FB(L)$, can be defined as the minimal number of bands
of a flat banded surface whose boundary is $L$.

\section{Existence of flat banded surfaces for a link}\label{string}

This section provides a proof of the existence of a flat banded surface for a given link. Knot theory's standard definitions and notations can be found in~\cite{Adams}.

It is already proven that all links are boundaries of knotted banded surfaces which might
have more than one disc. We first prove that banded surfaces of more than two discs can be modified to banded surfaces of a single disc without changing the link type of the boundary of the banded surfaces in Lemma~\ref{2to1lem}. Next in Lemma~\ref{unknottedlem}, if some bands in a banded surface are knotted,
the surface is shown to be manipulable to make all the bands unknotted without changing the link type of the boundaries of the banded surfaces. Finally Lemma~\ref{flatlem} shows that a non-flat band in a banded surface can be deformed into flat bands without changing the link type of the boundaries of the banded surfaces. These lemmas together can be used to prove in Theorem~\ref{existencethm} that there exists a flat banded surface for a given link.

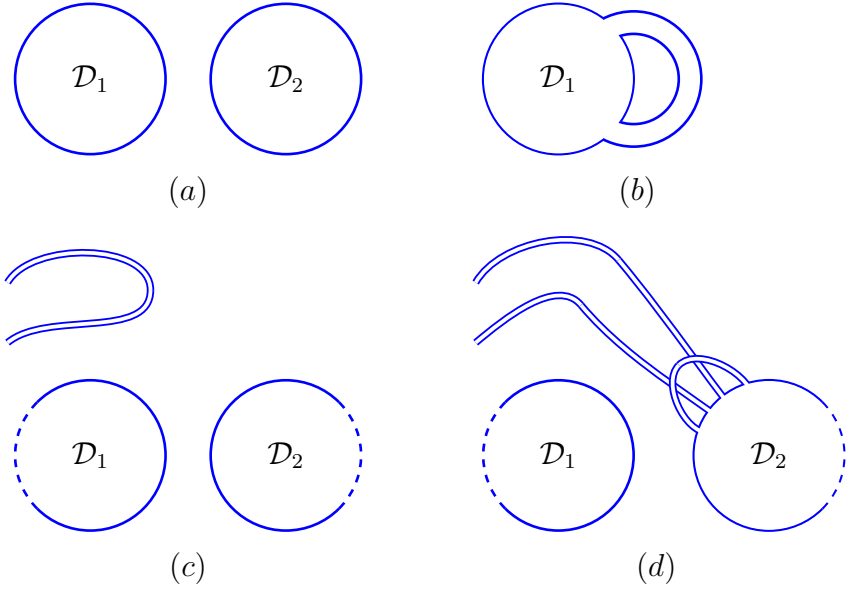
\begin{figure}
$$
\begin{pspicture}[shift=-2](-2.5,-1.7)(2.7,1.2)
\psarc[linewidth=1pt](-1.3,0){1}{0}{360}
\psarc[linewidth=1pt](1.3,0){1}{0}{360}
\rput(0,-1.5){{$(a)$}}
\rput(-1.3,0){{$\mathcal{D}_1$}}
\rput(1.3,0){{$\mathcal{D}_2$}}
\end{pspicture} \quad\quad
\begin{pspicture}[shift=-2](-2.7,-1.7)(2.5,1.2)
\psarc[linewidth=1pt](-1.3,0){1}{-35}{35}
\psarc[linewidth=1pt](-1.3,0){1}{52.5}{307.5}
\psarc[linewidth=1pt](-.3,0){.9}{0}{360}
\psarc[linewidth=1pt](-.3,0){.6}{0}{360}
\pscircle[fillstyle=solid, fillcolor=white, linecolor=white](-1.3,0){.99}
\rput(-.25,-1.5){{$(b)$}}
\rput(-1.3,0){{$\mathcal{D}_1$}}
\end{pspicture}
$$
$$
\begin{pspicture}[shift=-2](-2.5,-2)(2.7,3.1)
\psarc[linewidth=1pt](-1.3,0){1}{-135}{135}
\psarc[linewidth=1pt, linestyle=dashed](-1.3,0){1}{135}{225}
\psarc[linewidth=1pt](1.3,0){1}{45}{315}
\psarc[linewidth=1pt, linestyle=dashed](1.3,0){1}{-45}{45}
\pccurve[doubleline=true, angleA=60,angleB=90](-2.4,2.3)(-.5,2.2)
\pccurve[doubleline=true, angleA=40,angleB=-90](-2.4,1.5)(-.5,2.2)
\rput(0,-1.5){{$(c)$}}
\rput(-1.3,0){{$\mathcal{D}_1$}}
\rput(1.3,0){{$\mathcal{D}_2$}}
\end{pspicture} \quad\quad
\begin{pspicture}[shift=-2](-2.7,-2)(2.5,3.1)
\psarc[linewidth=1pt](-1.3,0){1}{-135}{135}
\psarc[linewidth=1pt, linestyle=dashed](-1.3,0){1}{135}{225}
\psarc[linewidth=1pt](1.5,0){1}{45}{315}
\psarc[linewidth=1pt, linestyle=dashed](1.5,0){1}{-45}{45}
\pccurve[doubleline=true, angleA=60,angleB=130](-2.4,2.3)(-.5,2.6)
\pccurve[doubleline=true, angleA=-50,angleB=130](-.5,2.6)(1.5,0)
\pccurve[doubleline=true, angleA=40,angleB=130](-2.4,1.5)(-1,2)
\pccurve[doubleline=true, angleA=-50,angleB=140](-1,2)(1.5,0)
\pccurve[doubleline=true, angleA=90,angleB=45](1.3,.7)(.3,1.2)
\pccurve[doubleline=true, angleA=-135,angleB=180](.3,1.2)(.7,.3)
\pscircle[fillstyle=solid, fillcolor=white, linecolor=white](1.5,0){.99}
\rput(0,-1.5){{$(d)$}}
\rput(-1.3,0){{$\mathcal{D}_1$}}
\rput(1.5,0){{$\mathcal{D}_2$}}
\end{pspicture}
$$
\caption{$(a), (b)$ Isotopy showing that the trivial link of two components is a banded surface with a single disc and $(c), (d)$ isotopy showing that a banded surface of two disc can be isotop to a banded surface of one disc without changing the link type of the boundary.} \label{twotooneisotopy}
\end{figure}

\begin{lem}
Let $B$ be a knotted banded surface of a link $L$, where $B$ has more than two discs.
Then, there exists a knotted banded surface $C$ of a single disc whose boundary is the link $L$.
\label{2to1lem}
\begin{proof}
A banded surface of two discs need to inductively demonstrated to be isotop to a banded surface of a single disc.
If both discs are not attached by a band, it is a trivial link of two components which can be isotop to a banded surface of one disc and one band as illustrated in~Figure~\ref{twotooneisotopy} $(a), (b)$.
Next consider a disc ${\mathcal{D}_1}$ having at least one band as depicted in Figure~\ref{twotooneisotopy} $(c)$.
This band can be isotoped as depicted in Figure~\ref{twotooneisotopy} $(d)$ by creating a new band to the other disc ${\mathcal{D}_2}$.
Sliding all bands on ${\mathcal{D}_2}$ to ${\mathcal{D}_2}$ along a band between them leads to a banded surface of a single disc by shrinking the disc ${\mathcal{D}_2}$.
\end{proof}
\end{lem}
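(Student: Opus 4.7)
The plan is to prove Lemma~\ref{2to1lem} by induction on the number $d$ of discs in the knotted banded surface $B$, decreasing $d$ by one at each step until only a single disc remains. The base case $d=1$ requires nothing. Since every step is an ambient isotopy of the banded surface, the boundary link $L$ is preserved throughout, so the single-disc banded surface obtained at the end is the desired $C$.

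For the inductive step, fix two discs $\D_1, \D_2 \subset B$ and split into two situations. The main situation is when some band $\beta$ of $B$ already has one end on $\D_1$ and the other on $\D_2$. Then I would slide every band with an endpoint on $\D_1$ along $\beta$ (a standard handle slide on a surface with boundary), moving the endpoint across $\beta$ onto $\D_2$. This is an ambient isotopy, hence the boundary link is unchanged. After all such slides, the only band touching $\D_1$ is $\beta$, so $\D_1 \cup \beta$ is a disc attached to $\D_2$ along an arc and may be absorbed into $\D_2$ by shrinking $\D_1$ along $\beta$, exactly as in Figure~\ref{twotooneisotopy}~$(c),(d)$. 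The result is a knotted banded surface with $d-1$ discs and boundary $L$.

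The remaining situation is when $\D_1$ and $\D_2$ lie in distinct connected components of $B$. In that case the sublinks of $L$ bounded by these components lie in disjoint $3$-balls (they form a split portion of $L$), and inside those disjoint balls the local model of Figure~\ref{twotooneisotopy}~$(a),(b)$ applies: two disjoint discs bounding the $2$-component unlink can be replaced by one disc with a single unknotted looping band having the same boundary. To reduce the general subcase to this model, I would first apply the first situation repeatedly inside the component of $\D_2$ to collapse that entire component onto $\D_2$, and then perform the local replacement. This again decreases $d$ by one without changing $L$.

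The main obstacle is ensuring that each band slide and each disc absorption is performed without disturbing the rest of $B$; in particular, when bands on $\D_1$ or $\D_2$ are themselves knotted or linked with other bands, one must verify that sliding along $\beta$ and shrinking $\D_1$ along $\beta$ can be carried out by an ambient isotopy rather than merely a homeomorphism of the abstract surface. The pictures in Figure~\ref{twotooneisotopy} provide the local model, and the fact that bands of a knotted banded surface are allowed to be knotted (rather than unknotted, as in the flat setting) means no obstruction arises from forcing any band to remain standardly embedded.
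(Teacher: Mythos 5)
Your main case---an existing band $\beta$ joining $\mathcal{D}_1$ to $\mathcal{D}_2$, sliding every band off $\mathcal{D}_1$ across $\beta$, then absorbing $\mathcal{D}_1\cup\beta$ into $\mathcal{D}_2$---is sound and is exactly the endgame of the paper's proof. The genuine gap is in your remaining case: two connected components of a knotted banded surface need \emph{not} lie in disjoint $3$-balls, and the sublinks of $L$ they bound need not be split. For instance, take two annuli (each a disc with a single band) embedded so that their cores form a Hopf link: the two surface components are disjoint, but no $2$-sphere separates them, and the boundary is a nonsplit $4$-component link. Since bands in a knotted banded surface may be knotted and mutually linked, disjoint components can be entangled arbitrarily. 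Consequently your reduction ``collapse the component of $\mathcal{D}_2$, then apply the local model of Figure~\ref{twotooneisotopy} $(a),(b)$ inside disjoint balls'' cannot be carried out: after collapsing, $\mathcal{D}_2$ carries bands linked through the component of $\mathcal{D}_1$, and the disjoint balls you invoke do not exist. A smaller inaccuracy: the move $(a),(b)$ replaces two discs by a disc with a band (an annulus), so it is a replacement of the surface preserving only the boundary link, not an ambient isotopy of the surface; your opening claim that every step is an ambient isotopy is therefore already false for this move, though harmless for the lemma's conclusion. (Also, your two cases as stated are not exhaustive---$\mathcal{D}_1,\mathcal{D}_2$ may lie in one component without being joined by a band---but this is repaired just by choosing an adjacent pair of discs.)

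The paper avoids any appeal to splitness with a move your proposal is missing, shown in Figure~\ref{twotooneisotopy} $(c),(d)$: when $\mathcal{D}_1$ carries at least one band, that band is isotoped by a finger move to run over $\mathcal{D}_2$ and is attached there, \emph{creating} a new band joining the two discs (with a small compensating band) without changing the boundary link; this works regardless of how the components are linked, because all the entanglement is pushed into the bands, which the definition of a knotted banded surface allows. The bare move $(a),(b)$ is invoked in the paper only when the discs carry no bands at all, in which case a bare disc genuinely does sit in a ball disjoint from the rest of the surface. To repair your proof, replace your second case by this band-creation move (reducing it to your first case) rather than by the false disjoint-balls claim.
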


Before considering the next lemma, we first define a height function $\phi$ of a knotted banded surface as drawn in $\mathbb{R}^2$.
The discs in the knotted banded surface can be assumed to be the unit disc $\{ (x,y)|x^2 + y^2 \le 1 \}$.
The height $\phi(s,t)$ of a point $(s,t)$ in the core of bands in a knotted banded surface can then be defined by the Euclidean distance $d((s,t),(0,0))$. We can perturb the knotted banded surface in general position so that $\phi$ has only finitely many local minima on the core of each band.
By the normal surface theory, if $\phi$ has no local minimum at the core of each band, then it is already a banded surface for which all bands are unknotted.

\begin{lem}
Let $B$ be a knotted banded diagram of a link $L$ where $B$ has $k$-knotted bands.
Then there exists a knotted banded diagram $C$ of the link $L$ which has $(k-1)$-knotted bands.
\label{unknottedlem}
\begin{proof}
If $B$ has a knotted band $b_1$ then
the height function $\phi$ on the core of the band $b_1$ has at least one but finitely many local minima. For such a local minimum, pulling down the band $b_1$ toward the disc leads to the isotopy as
illustrated in Figure~\ref{twotooneisotopy} $(d)$. This process decreases by one the number of the local minima of the height function on the core of the bands in the banded surface $B$ and increase by one the number of band which does not have a
local minimum.
By repeating this process until the height function $\phi$ on the core of the band $b_1$ has no minimum, we obtain a
banded surface $C$ for which the band corresponding to $b_1$ is unknotted. It completes the proof.
\end{proof}
\end{lem}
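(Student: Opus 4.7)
The plan is to pick one knotted band $b_1$ of $B$ and deform it, without disturbing the rest of the surface, into an unknotted band while preserving the boundary link $L$; the resulting banded surface then has exactly $(k-1)$ knotted bands, so I can take this surface as $C$.

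First, by the general-position set-up immediately preceding the lemma, the height function $\phi$ restricted to the core of $b_1$ has only finitely many local minima, and since $b_1$ is knotted there is at least one such minimum (the normal surface theory remark above the lemma says that the absence of local minima on the core forces the band to be unknotted). Fix one such local minimum $p$, chosen innermost among those remaining on $b_1$. The key step is a local ambient isotopy supported in a small neighbourhood of $p$ that pushes the dip of the core at $p$ radially inward until it meets the disc $\D$; on reaching $\D$ the strand splits off, producing either an arc that can be absorbed into $\D$ or a new band whose core is strictly decreasing in $\phi$ and hence carries no local minimum. This is exactly the isotopy drawn in Figure~\ref{twotooneisotopy}~(c), (d), now used in the opposite direction, and it is a genuine ambient isotopy of $\mathbb{R}^3$, so the boundary link $L$ is unchanged.

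After one application of this move, the number of local minima of $\phi$ on the core of $b_1$ has strictly decreased by one, while every newly created band has a monotone core. Iterating on the remaining minima of $\phi|_{b_1}$ (finite in number by general position) therefore terminates in a banded surface in which the band corresponding to $b_1$ has no local minima at all, and hence is unknotted by the remark cited above. Since the local isotopies are supported in arbitrarily small neighbourhoods of points on $b_1$, the other $k-1$ knotted bands are left untouched and in particular remain knotted or unknotted as before, so the resulting banded diagram $C$ has exactly $(k-1)$ knotted bands.

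The main obstacle will be verifying that each push-down move can be carried out without creating new crossings with the other bands of $B$. This is where the choice of an innermost local minimum matters: one isotopes only through a thin regular neighbourhood of a radial arc in $\D$, and a standard general-position perturbation transverse to the cores of the other bands makes this interaction empty. Once that is justified, the inductive count of local minima does the rest.
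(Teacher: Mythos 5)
Your proposal is correct and follows essentially the same route as the paper's own proof: both arguments count the finitely many local minima of the height function $\phi$ on the core of the knotted band $b_1$ and remove them one at a time by pulling each minimum down to the disc as in Figure~\ref{twotooneisotopy}~$(d)$, each move decreasing the number of minima by one and creating a band with monotone core, until the normal-surface remark preceding the lemma guarantees the band is unknotted. Your added care about choosing an innermost minimum and keeping the isotopies supported in small neighbourhoods away from the other bands merely makes explicit what the paper leaves implicit.
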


\begin{figure}
$$
\begin{pspicture}[shift=-1.4](-1.2,-.6)(1.2,2.5)
\psframe[linecolor=lightgray,fillstyle=solid,fillcolor=lightgray](-1,2)(1,2.4)
\psline(-1,2)(-.2,2)(-.2,1.4)
\psline(-.2,.2)(-.2,-.5)
\psline(1,2)(.2,2)(.2,1.4)
\psline(.2,.2)(.2,-.5)
\pccurve[angleA=-90,angleB=90](-.2,.8)(.2,.2)
\pccurve[angleA=-90,angleB=50](.2,.8)(.04,.54)
\pccurve[angleA=-140,angleB=90](-.04,.46)(-.2,.2)
\pccurve[angleA=-90,angleB=90](-.2,1.4)(.2,.8)
\pccurve[angleA=-90,angleB=50](.2,1.4)(.04,1.14)
\pccurve[angleA=-140,angleB=90](-.04,1.06)(-.2,.8)
\end{pspicture} \cong
\begin{pspicture}[shift=-1.4](-2.2,-.6)(2.2,2.5)
\psframe[linecolor=lightgray,fillstyle=solid,fillcolor=lightgray](-2,2)(2,2.4)
\psline(-.2,-.5)(-.2,.62)
\psline(.2,-.5)(.2,.62)
\psline(-2,2)(-1.8,2)
\psline(-1.4,2)(-1,2)
\psline(-.6,2)(-.2,2)(-.2,1.2)
\psline(2,2)(1.8,2)
\psline(1.4,2)(1,2)
\psline(.6,2)(.2,2)(.2,1.2)
\psarc(.4,2){1.4}{180}{0} \psarc(.4,2){1}{180}{0}
\psarc(-.4,2){1.4}{180}{287} \psarc(-.4,2){1}{180}{258}
\psarc(-.4,2){1.4}{315.5}{0} \psarc(-.4,2){1}{293.2}{0}
\end{pspicture}
$$
\caption{The move which removes a full twist in a band by adding two flat bands without changing the  link type.} \label{flatmove}
\end{figure}
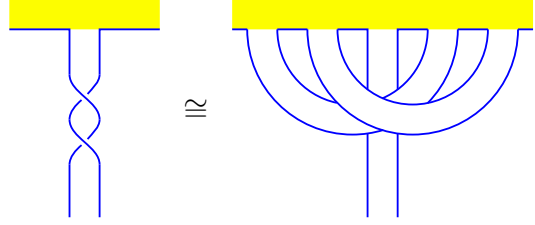

\begin{lem}
Let $B$ be a unknotted banded surface of a link $L$ where $B$ has at least one band with nonzero framing.
Then there exists a flat unknotted banded surface $C$ of the link $L$.
\label{flatlem}
\begin{proof}
If a band in a unknotted banded surface of a link $L$ has $m$ full twists, the move shown in Figure~\ref{flatmove}
can be repeated until a flat band results.
By performing this process for each band, a flat unknotted banded surface $C$ of the link $L$ is left.
\end{proof}
\end{lem}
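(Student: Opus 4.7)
The plan is to reduce the problem to a purely local move: show that a single full twist in one band can be traded for a pair of flat bands without altering the boundary link type, and then iterate this move over every twist of every band.

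First I would set up the local picture. Fix a band $b_i$ of $B$ carrying framing $m_i \neq 0$, and choose a small sub-rectangle of $b_i$ in which one full twist is concentrated (all other twists and all linking with the remaining bands can be pushed out of this window by ambient isotopy, since $B$ is unknotted). Inside this window one has a standard twisted strip attached to the disc $\D$. I would then explicitly describe the replacement: cut the sub-rectangle out, and insert in its place a configuration consisting of the original strip made flat together with two new flat bands looping around it, as depicted in Figure~\ref{flatmove}. A direct inspection of the two boundaries shows they are ambient isotopic in $\mathbb{R}^3$ (the two extra bands together realize a $\pm 1$ handle-pair whose net effect on the boundary is a full twist), so the link type of $\partial B$ is unchanged, and the new bands are visibly unknotted and untwisted.

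Next I would iterate. Since $|m_i|$ is finite, applying the local move $|m_i|$ times to band $b_i$ replaces it by a flat band together with $2|m_i|$ additional flat bands; doing this for each $i$ with $m_i \neq 0$ produces an unknotted banded surface $C$ all of whose bands carry zero framing, i.e.\ a flat unknotted banded surface. Its boundary equals $\partial B = L$ by construction, which is the required conclusion.

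The main obstacle I anticipate is the bookkeeping for the local move: one must check that the two new bands introduced in Figure~\ref{flatmove} can indeed be regarded as bands of a banded surface in the sense of Section~\ref{def}, that is, their cores can be realized as unknotted arcs attached to the same disc $\D$ without creating new knotting among themselves or with the existing bands. This is the reason the move must be performed inside a disc neighborhood where the rest of $B$ is invisible; once that localization is justified, the verification that framing decreases by exactly one while unknottedness and flatness of the newly introduced bands are preserved is straightforward from the picture.
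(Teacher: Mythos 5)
Your proposal is correct and follows essentially the same route as the paper's proof: both trade each full twist for two flat bands via the local move of Figure~\ref{flatmove} and iterate over all twists of all bands until every framing is zero. The extra care you take in localizing the move inside a disc neighborhood and verifying that the two new bands remain unknotted and attached to $\D$ is exactly the bookkeeping the paper leaves implicit, so there is no substantive difference in approach.
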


\begin{thm}
For a given link $L$, there exists a flat unknotted banded surface $F$ whose boundary is the link $L$.
\label{existencethm}
\begin{proof}
For a given link $L$, there exists a knotted banded surface $B$ whose boundary is the link $L$. If $B$ has more than one discs, we apply Lemma~\ref{2to1lem}
to have a knotted banded surface $C$ with a single disc.
If one of bands in the knotted banded surface $C$ is knotted, we apply Lemma~\ref{unknottedlem} inductively to have a banded surface $E$ without a local minimum at the core of each band.
At last, by applying Lemma~\ref{flatlem} we can isotop $E$ to a flat unknotted banded surface $F$ whose boundary is the link $L$.
\end{proof}
\end{thm}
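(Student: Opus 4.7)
The plan is to assemble the three preceding lemmas into a single chain of reductions. The starting point is the remark already recorded before Lemma~\ref{2to1lem}: every link $L$ bounds \emph{some} knotted banded surface $B$, possibly with many discs, knotted bands, and nonzero framings. Our goal is to successively normalize $B$ so that, step by step, it gains each of the three properties demanded by the theorem — a single disc, unknotted bands, and zero framing on every band — without ever altering the link type of the boundary.

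First I would invoke Lemma~\ref{2to1lem} repeatedly. That lemma reduces a two-disc banded surface to a one-disc banded surface; by induction on the number of discs (merging two discs at a time), any knotted banded surface $B$ for $L$ can be replaced by a knotted banded surface $C$ of $L$ that has exactly one disc. Next I would apply Lemma~\ref{unknottedlem} inductively on the number of knotted bands of $C$: each application strictly decreases the knotted-band count by one while preserving both the single-disc structure and the boundary link. After finitely many iterations we obtain a banded surface $E$ with one disc and only unknotted bands, i.e.\ an unknotted banded surface of $L$, though its bands may still carry nonzero framings. Finally I would apply Lemma~\ref{flatlem} to $E$ (performing the twist-removing move of Figure~\ref{flatmove} on each band with nonzero framing), producing a flat unknotted banded surface $F$ whose boundary is still $L$.

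The only bookkeeping worth checking is that the three normalizations are compatible in the stated order: the merging move of Lemma~\ref{2to1lem} may introduce a new band, but the new band inherits the structural features of the one slid across it, so it does not re-introduce extra discs at later stages; the unknotting move of Lemma~\ref{unknottedlem} works disc-by-disc on the cores of bands and so preserves the single-disc condition obtained in the first step; and the twist-removing move of Lemma~\ref{flatlem} replaces a band by two flat bands attached to the same disc, hence preserves both the single-disc condition and the unknottedness of every band. Thus the sequence $B \leadsto C \leadsto E \leadsto F$ terminates with the desired flat unknotted banded surface. I do not anticipate a serious obstacle — the whole argument is an orchestration of the lemmas — but the point requiring the most care is verifying that Lemma~\ref{unknottedlem} and Lemma~\ref{flatlem} do not undo the single-disc reduction achieved by Lemma~\ref{2to1lem}.
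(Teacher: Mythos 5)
Your proposal is correct and matches the paper's own proof: both start from the known existence of a knotted banded surface for $L$, then chain Lemma~\ref{2to1lem}, Lemma~\ref{unknottedlem} (applied inductively), and Lemma~\ref{flatlem} in exactly that order. Your extra bookkeeping remark — checking that each later normalization preserves the single-disc and unknotted-band properties already achieved — is a sensible amplification the paper leaves implicit, not a different argument.
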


\section{Banded surfaces and flat banded surfaces of small indices}\label{smallindex}

This section considers banded links of band index $0, 1$ or $2$.
One can easily see that a link $L$ has a (flat) band index $0$ if and only if it is the trivial knot.
As any band in a banded surface is not allowed to be knotted, the banded surface of band index $1$ is
a disc with one band of $n$-full twists. Therefore, a link $L$ has band index $1$ if and only if $L$ is a closed
$2$-braid $\overline{(\sigma_1)^{2n}}$ where each string in the braid is oppositely aligned.
A link $L$ has flat band index $1$ if and only if it is the trivial link of two components as depicted in Figure~\ref{twotooneisotopy} $(a)$.
Considered next are links of band index $2$ and flat band index $2$. Lemma~\ref{relationlem} shows the relation between
links of band index $n$ and links of flat band index $n$.

\begin{lem}
A link $L$ is an $n$-banded link if and only if there exists a flat $n$-banded link $F$ such that
$L$ is obtained from $F$ by adding suitable full twists on each band.
\label{relationlem}
\end{lem}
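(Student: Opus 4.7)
The plan is to observe that this lemma is essentially a repackaging of the definitions: a (flat) banded surface consists of a disc together with $n$ unknotted bands, where the only difference between the general and flat cases is that the flat version records framings $(0,\ldots,0)$ rather than some tuple $(m_1,\ldots,m_n)$. Adding or removing a full twist on a single band is a local move that changes neither the number of bands nor their unknottedness; it only shifts the framing by $\pm 1$. So one merely needs to convert framings to twists and back.

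First I would establish the forward implication. Suppose $L$ is an $n$-banded link, so there is an unknotted $n$-banded surface $B$ with $\partial B=L$, coming from some framed $n$-string link $(S,\mu)$ with $\mu=(m_1,\ldots,m_n)$. Let $F$ be the surface obtained from $B$ by removing $m_i$ full twists from the $i$-th band for each $i$; this is precisely the reverse of the basic twist-insertion operation and produces a surface built from the same underlying $n$-string link $S$ with framing $(0,\ldots,0)$. In particular, $F$ is an unknotted flat $n$-banded surface, so $\partial F$ is a flat $n$-banded link, and by construction $L=\partial B$ is recovered from $\partial F$ by putting the $m_i$ full twists back into the $i$-th band.

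The converse is the mirror of this. Suppose $F$ is a flat $n$-banded link, realized as $\partial F_0$ for some unknotted flat $n$-banded surface $F_0$, and $L$ is obtained from $F$ by adding $m_i$ full twists to the $i$-th band. Insert those same twists into the corresponding bands of $F_0$; this yields an unknotted $n$-banded surface $B$ (with framing $(m_1,\ldots,m_n)$) whose boundary is exactly $L$. Hence $L$ is an $n$-banded link.

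I do not expect any real obstacle here, since the only nontrivial point is checking that a local twist on a band preserves both the band count and the unknottedness of each band. That observation is already used implicitly in Lemma~\ref{flatlem} (where the trade-off is in the opposite direction, replacing a twist by two flat bands), and applying it band-by-band in either direction gives the bijection claimed in the lemma.
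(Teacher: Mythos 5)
Your proposal is correct: the twist-removal/twist-insertion argument you give is exactly the definitional unwinding the lemma rests on, since the underlying $n$-string link is unchanged and only the framings $(m_1,\ldots,m_n)$ are shifted to $(0,\ldots,0)$ and back. In fact the paper states Lemma~\ref{relationlem} without any proof, treating it as immediate from the definitions, so your write-up simply makes explicit the argument the authors intended.
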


Lemma~\ref{relationlem} allows consideration of only flat $2$-band knots.
The first example, Example~\ref{2flatexample}, can be used to produce many flat $2$-band knots.

\begin{figure}
$$
\begin{pspicture}[shift=-2.4](-3.1,-2.3)(3.1,2.3)
\pccurve[doubleline=true, angleA=135,angleB=180,linewidth=1.2pt](-.2,-1.3)(.5,-.7)
\pccurve[doubleline=true, angleA=0,angleB=-90,linewidth=1.2pt](.5,-.7)(1.5,0)
\pccurve[doubleline=true, angleA=90,angleB=-90,linewidth=1.2pt](1.5,0)(-.5,1.2)
\pccurve[doubleline=true, angleA=-90,angleB=-135,linewidth=1pt](-2.5,0)(0,-1.4)
\pccurve[doubleline=true, angleA=45,angleB=0,linewidth=1.2pt](.2,-1.3)(-.5,-.7)
\pccurve[doubleline=true, angleA=-180,angleB=-90,linewidth=1.2pt](-.5,-.7)(-1.5,0)
\pccurve[doubleline=true, angleA=90,angleB=-90,linewidth=1.2pt](-1.5,0)(.5,1.2)
\pccurve[doubleline=true, angleA=90,angleB=0,linewidth=1.2pt](.5,1.2)(-.5,1.7)
\pccurve[doubleline=true, angleA=180,angleB=90,linewidth=1.2pt](-.5,1.7)(-2.5,0)
\pccurve[doubleline=true, angleA=-90,angleB=-45,linewidth=1.2pt](2.5,0)(0,-1.4)
\pccurve[doubleline=true, angleA=90,angleB=-180,linewidth=1.2pt](-.5,1.2)(.5,1.7)
\pccurve[doubleline=true, angleA=0,angleB=90,linewidth=1.2pt](.5,1.7)(2.5,0)
\pscircle[fillstyle=solid, fillcolor=white, linecolor=white](0,-1.4){.3}
\psarc[linewidth=1pt](0,-1.4){.32}{-26.3}{28}
\psarc[linewidth=1pt](0,-1.4){.32}{53}{127}
\psarc[linewidth=1pt](0,-1.4){.32}{154.7}{205}
\psarc[linewidth=1pt](0,-1.4){.32}{220}{320}
\pccurve[linecolor=darkred, angleA=0,angleB=-90](0,-2)(3,0)
\pccurve[linecolor=darkred, angleA=90,angleB=0](3,0)(0,2)
\pccurve[linecolor=darkred, angleA=180,angleB=90](0,2)(-3,0)
\pccurve[linecolor=darkred, angleA=-90,angleB=180](-3,0)(0,-2)
\pccurve[linecolor=darkred, angleA=0,angleB=-90](0,-.5)(1,0)
\pccurve[linecolor=darkred, angleA=90,angleB=0](1,0)(0,.5)
\pccurve[linecolor=darkred, angleA=180,angleB=90](0,.5)(-1,0)
\pccurve[linecolor=darkred, angleA=-90,angleB=180](-1,0)(0,-.5)
\rput(0,-1.4){{$\mathcal{D}$}}
\end{pspicture}
$$
\caption{A pattern for producing satellite knots of flat band index $2$.} \label{flat2pattern}
\end{figure}
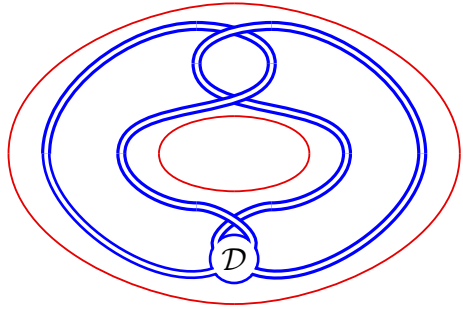

\begin{exa}
Let $B$ be a flat $2$-banded surface, whose cores are embedded in a
thickened annulus, as shown in figure~\ref{flat2pattern}. Assume its conversion into
bands using the blackboard framing convention. If the annulus containing
the bands is used to form a satellite of a knot $K$ then the resulting surface
consists of $2$ bands which are individually unknotted, and so its
boundary, a non-trivial satellite of $K$, is a flat $2$-banded knot. \label{2flatexample}
\end{exa}

This provides examples of flat $2$-banded knots and also demonstrates
a framework for discussing all possible flat $2$-banded knots as stated in Theorem~\ref{2flatthm}.

\begin{figure}
$$
\begin{pspicture}[shift=-2](-2.5,-3.2)(3,2.5)
\psline(-1.4,.15)(1.4,.15)\psline[arrowscale=1.5]{->}(1.2,.15)(1.1,.15)\psline[arrowscale=1.5]{->}(-1.2,.15)(-1.3,.15)
\psline(-1.4,-.15)(1.4,-.15)\psline[arrowscale=1.5]{<-}(1.3,-.15)(1.2,-.15)\psline[arrowscale=1.5]{<-}(-1.1,-.15)(-1.2,-.15)
\psline(-.15,1.4)(-.15,-1.4)\psline[arrowscale=1.5]{->}(-.15,1.2)(-.15,1.1)\psline[arrowscale=1.5]{->}(-.15,-1.2)(-.15,-1.3)
\psline(.15,1.4)(.15,-1.4)\psline[arrowscale=1.5]{->}(.15,1.2)(.15,1.3)\psline[arrowscale=1.5]{->}(.15,-1.2)(.15,-1.1)
\psarc[linewidth=1pt](0,0){1}{-82}{-8}
\psarc[linewidth=1pt](0,0){1}{8}{82}
\psarc[linewidth=1pt](0,0){1}{98}{172}
\psarc[linewidth=1pt](0,0){1}{188}{262}
\pccurve[linestyle=dashed, angleA=-90,angleB=-135](-.15,-1.4)(2,-2)
\pccurve[linestyle=dashed, angleA=45,angleB=-45](2,-2)(2.5,2)
\pccurve[linestyle=dashed, angleA=135,angleB=90](2.5,2)(-.15,1.4)
\pccurve[linecolor=gray, linewidth=1.4pt, angleA=-90,angleB=-135](0,-1.4)(1.9,-1.9)
\pccurve[linecolor=gray, linewidth=1.4pt, angleA=45,angleB=-45](1.9,-1.9)(2.4,1.9)
\pccurve[linecolor=gray, linewidth=1.4pt, angleA=135,angleB=90](2.4,1.9)(0,1.4)
\pccurve[linestyle=dashed, angleA=-90,angleB=-135](.15,-1.4)(1.8,-1.8)
\pccurve[linestyle=dashed, angleA=45,angleB=-45](1.8,-1.8)(2.3,1.8)
\pccurve[linestyle=dashed, angleA=135,angleB=90](2.3,1.8)(.15,1.4)
\pccurve[linestyle=dashed, angleA=180,angleB=-135](-1.4,-.15)(-2,2)
\pccurve[linestyle=dashed, angleA=45,angleB=135](-2,2)(1.5,2)
\pccurve[linestyle=dashed, angleA=-45,angleB=0](1.5,2)(1.4,-.15)
\pccurve[linecolor=gray, linewidth=1.4pt, angleA=180,angleB=-135](-1.4,0)(-1.9,1.9)
\pccurve[linecolor=gray, linewidth=1.4pt, angleA=45,angleB=135](-1.9,1.9)(1.4,1.9)
\pccurve[linecolor=gray, linewidth=1.4pt, angleA=-45,angleB=0](1.4,1.9)(1.4,0)
\pccurve[linestyle=dashed, angleA=180,angleB=-135](-1.4,.15)(-1.8,1.8)
\pccurve[linestyle=dashed, angleA=45,angleB=135](-1.8,1.8)(1.3,1.8)
\pccurve[linestyle=dashed, angleA=-45,angleB=0](1.3,1.8)(1.4,.15)
\pscircle[fillstyle=solid, fillcolor=white, linecolor=white](0,0){.99}
\psline[linecolor=gray, linewidth=1.4pt](0,1.4)(0,-1.4)
\psline[linecolor=gray, linewidth=1.4pt](1.4,0)(-1.4,0)
\rput(0,-3){{$(a)$}}
\rput(.3,.3){{$\mathcal{D}$}}
\end{pspicture} \quad\quad
\begin{pspicture}[shift=-2](-2.5,-3.2)(2.5,2.5)
\psline(-1.4,.15)(1.4,.15)\psline[arrowscale=1.5]{->}(1.2,.15)(1.1,.15)\psline[arrowscale=1.5]{->}(-1.2,.15)(-1.3,.15)
\psline(-1.4,-.15)(1.4,-.15)\psline[arrowscale=1.5]{<-}(1.3,-.15)(1.2,-.15)\psline[arrowscale=1.5]{<-}(-1.1,-.15)(-1.2,-.15)
\psline(-.15,1.4)(-.15,-1.4)\psline[arrowscale=1.5]{->}(-.15,1.2)(-.15,1.1)\psline[arrowscale=1.5]{->}(-.15,-1.2)(-.15,-1.3)
\psline(.15,1.4)(.15,-1.4)\psline[arrowscale=1.5]{->}(.15,1.2)(.15,1.3)\psline[arrowscale=1.5]{->}(.15,-1.2)(.15,-1.1)
\psarc[linewidth=1pt](0,0){1}{-82}{-8}
\psarc[linewidth=1pt](0,0){1}{8}{82}
\psarc[linewidth=1pt](0,0){1}{98}{172}
\psarc[linewidth=1pt](0,0){1}{188}{262}
\pccurve[linestyle=dashed, angleA=-90,angleB=-135](-.15,-1.4)(2,-2)
\pccurve[linestyle=dashed, angleA=45,angleB=0](2,-2)(1.4,.15)
\pccurve[linestyle=dashed, angleA=-90,angleB=-135](.15,-1.4)(1.8,-1.8)
\pccurve[linestyle=dashed, angleA=45,angleB=0](1.8,-1.8)(1.4,-.15)
\pccurve[linestyle=dashed, angleA=90,angleB=45](.15,1.4)(-2,2)
\pccurve[linestyle=dashed, angleA=-135,angleB=180](-2,2)(-1.4,-.15)
\pccurve[linestyle=dashed, angleA=90,angleB=45](-.15,1.4)(-1.8,1.8)
\pccurve[linestyle=dashed, angleA=-135,angleB=180](-1.8,1.8)(-1.4,.15)
\pscircle[fillstyle=solid, fillcolor=white, linecolor=white](0,0){.99}
\rput(0,-3){{$(b)$}}
\rput(0,0){{$\mathcal{D}$}}
\end{pspicture}
$$
\caption{The local structure around the disc $\mathcal{D}$ of a flat $2$-banded surface $B$; the dark gray line represents
an unknotted $2$ component link $L$.} \label{localfig}
\end{figure}
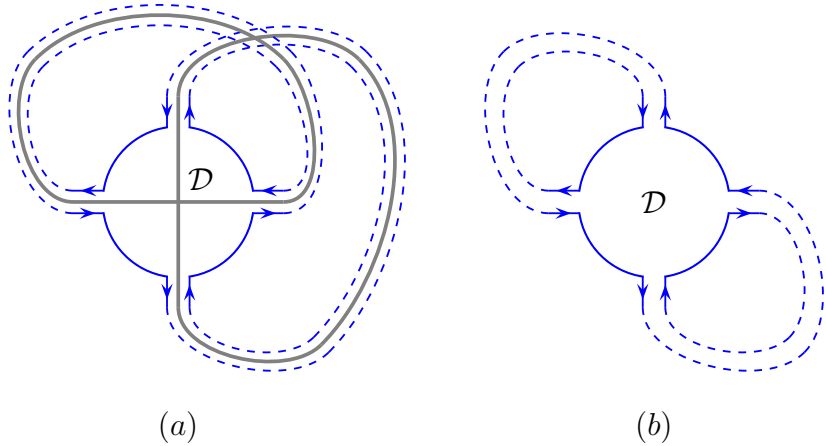

\begin{thm}
A knot $K$ is a flat $2$-banded knot if and only if there exits a $2$ component link $L$ whose
components $L_1, L_2$ are unknotted such that $K$ is the boundary of a surface which is obtained from
two untwisted bands that are deformation retracts of $L$ by plumbing at one of the crossing between $L_1$ and $L_2$.
\label{2flatthm}
\begin{proof}
Let $K$ be a flat $2$-banded knot with a
flat banded surface $B$. The local structure around the disc $\mathcal{D}$ must be as shown in Figure~\ref{localfig} $(a)$
because the boundary shown in Figure~\ref{localfig} $(b)$ has three components.
Consider the core of the bands in $B$, as indicted by the dark gray line in Figure~\ref{localfig} $(a)$.
It is an unknotted $2$ component link $L$ whose
components are unknotted such that $K$ is the boundary of a surface obtained from
two untwisted bands which is a deformation retract of $L$ by plumbing at one of the crossings of $L$.

Conversely, let $L$ be a $2$ component link with unknotted components.
$L$ can be represented with an untwisted band around each component. The two bands can then be
plumbed together at one of the crossings. The result is a
surface with two unknotted flat bands, whose boundary gives a flat $2$-banded
knot so long as it is not the trivial knot. Therefore, the knot $K$ is a flat $2$-banded knot.
\end{proof}
\end{thm}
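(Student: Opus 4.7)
The plan is to read off the $2$-component link $L$ from the two bands of a flat $2$-banded surface of $K$, after first pinning down the local configuration at the unique disc $\mathcal{D}$.

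For the forward direction, suppose $K$ bounds a flat $2$-banded surface $B$ with one disc $\mathcal{D}$ and two flat, unknotted bands $b_1,b_2$. The four attachment arcs of the bands on $\partial\mathcal{D}$ appear in one of two cyclic orders: alternating (ABAB) as in Figure~\ref{localfig} $(a)$, or non-alternating (AABB) as in Figure~\ref{localfig} $(b)$. A direct boundary count shows that the non-alternating case yields three boundary components, contradicting the hypothesis that $K$ is a knot, so the alternating pattern is forced. I would then close the core of each band $b_i$ through $\mathcal{D}$ to a simple closed curve $L_i$, obtaining a $2$-component link $L=L_1\cup L_2$. Because each band is flat and unknotted, each $L_i$ is an unknot admitting an untwisted annular regular neighborhood $A_i$ that deformation retracts onto $L_i$. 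The alternating configuration makes $L_1$ and $L_2$ meet transversely at a single point inside $\mathcal{D}$, and identifying $\mathcal{D}$ with the plumbing square and the portions of $A_i$ outside $\mathcal{D}$ with the bands $b_i$ exhibits $B$ as the plumbing of $A_1$ and $A_2$ at this crossing.

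The converse is more direct. Given $L$ with unknotted components, take untwisted annular neighborhoods $A_1,A_2$ of $L_1,L_2$, select a crossing $p$ between $L_1$ and $L_2$ in some projection, and plumb $A_1$ and $A_2$ at $p$. The resulting surface $F$ decomposes into the plumbing square (one disc) together with two flat bands that are the complementary portions of the $A_i$; each band is unknotted because the corresponding $L_i$ is. Since $p$ joins \emph{distinct} components of $L$, the four band endpoints on the boundary of the plumbing square interleave in the alternating pattern, so $\partial F$ is a single closed curve $K$. Thus $F$ is a flat $2$-banded surface with $\partial F=K$, as required.

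The main obstacle is the forward direction: ruling out the non-alternating configuration by a careful boundary count and then recognising $B$ intrinsically as a plumbing of two unknotted annuli. Once the alternating pattern at $\mathcal{D}$ is forced, the rest of the argument is a routine consequence of the flatness and unknottedness of the two bands together with the definition of plumbing.
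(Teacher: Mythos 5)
Your proposal is correct and follows essentially the same route as the paper's proof: forcing the alternating attachment pattern at the disc $\mathcal{D}$ by the boundary-component count, reading off the unknotted $2$-component link $L$ from the band cores, and reversing the construction via plumbing untwisted annular neighborhoods at a crossing between the two components. Your explicit remarks that the crossing must join \emph{distinct} components and that interleaving forces a connected boundary merely make precise what the paper's figures carry implicitly.
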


The construction method in Theorem~\ref{2flatthm} can be used to establish a similar classification theorem for
flat $n$-banded knots.

Example~\ref{2flat1genus} and~\ref{2bandnot2flat} respectively consider knots of genus $1$ which are not $2$-banded knots
and a $2$-banded knots which is not flat $2$-banded knots.

\begin{exa}
Let $K$ be a non-trivial knot. Let $L$ be the Whitehead double of $K$.
$L$ has genus $1$ and band index is strictly greater
than $2$.
\label{2flat1genus}
\begin{proof}
$L$ has genus $1$ and is spanned by a $2$-band surface $F$ lying inside a
solid torus neighborhood of $K$. The only unknotted curves in such a solid
torus must lie inside a ball in the solid torus, assuming that $K$ is non-trivial.
It is not possible to find two independent unknotted curves spanning the
homology $H_1 (F)$, since they would both have to lie in a ball in the solid
torus, and hence both represent $0$ in the homology group. However $H_1(F)$
maps onto the homology of the solid torus under the inclusion map. Therefore the band index of any Whitehead double is strictly greater than $2$.
\end{proof}
\end{exa}

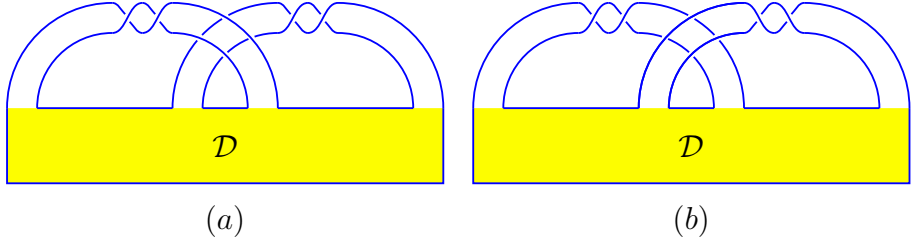
\begin{figure}
$$
\begin{pspicture}[shift=-1.4](-2.4,-1.7)(3.8,2)
\psframe[linecolor=lightgray,fillstyle=solid,fillcolor=lightgray](-2.2,-1)(3.6,0)
\psline(-2.2,0)(-2.2,-1)(3.6,-1)(3.6,0)
\psline(3.2,0)(1.4,0)
\psline(1,0)(.4,0)
\psline(0,0)(-1.8,0)
\pccurve[angleA=0,angleB=180](-.8,1)(-.4,1.4)
\pccurve[angleA=0,angleB=180](-.4,1)(0,1.4)
\pccurve[angleA=0,angleB=135](-.8,1.4)(-.63,1.25)
\pccurve[angleA=-45,angleB=180](-.57,1.15)(-.4,1)
\pccurve[angleA=0,angleB=135](-.4,1.4)(-.23,1.25)
\pccurve[angleA=-45,angleB=180](-.17,1.15)(0,1)
\pccurve[angleA=0,angleB=180](1.4,1)(1.8,1.4)
\pccurve[angleA=0,angleB=180](1.8,1)(2.2,1.4)
\pccurve[angleA=0,angleB=135](1.4,1.4)(1.57,1.25)
\pccurve[angleA=-45,angleB=180](1.63,1.15)(1.8,1)
\pccurve[angleA=0,angleB=135](1.8,1.4)(1.97,1.25)
\pccurve[angleA=-45,angleB=180](2.03,1.15)(2.2,1)
\psarc(0,0){1.4}{0}{90} \psarc(0,0){1}{0}{90}
\psarc(-.8,0){1.4}{90}{180} \psarc(-.8,0){1}{90}{180}
\psarc(2.2,0){1.4}{0}{90} \psarc(2.2,0){1}{0}{90}
\psarc(1.4,0){1.4}{90}{118} \psarc(1.4,0){1.4}{122}{136} \psarc(1.4,0){1.4}{140}{180}
\psarc(1.4,0){1}{90}{108} \psarc(1.4,0){1}{113}{132} \psarc(1.4,0){1}{137}{180}
\rput(.7,-.5){{$\mathcal{D}$}}
\rput(.7,-1.5){{$(a)$}}
\end{pspicture}
\begin{pspicture}[shift=-1.4](-2.4,-1.7)(3.8,2)
\psframe[linecolor=lightgray,fillstyle=solid,fillcolor=lightgray](-2.2,-1)(3.6,0)
\psline(-2.2,0)(-2.2,-1)(3.6,-1)(3.6,0)
\psline(3.2,0)(1.4,0)
\psline(1,0)(.4,0)
\psline(0,0)(-1.8,0)
\pccurve[angleA=0,angleB=180](-.8,1)(-.4,1.4)
\pccurve[angleA=0,angleB=180](-.4,1)(0,1.4)
\pccurve[angleA=0,angleB=135](-.8,1.4)(-.63,1.25)
\pccurve[angleA=-45,angleB=180](-.57,1.15)(-.4,1)
\pccurve[angleA=0,angleB=135](-.4,1.4)(-.23,1.25)
\pccurve[angleA=-45,angleB=180](-.17,1.15)(0,1)
\pccurve[angleA=0,angleB=180](1.4,1.4)(1.8,1)
\pccurve[angleA=0,angleB=180](1.8,1.4)(2.2,1)
\pccurve[angleA=0,angleB=-135](1.4,1)(1.57,1.15)
\pccurve[angleA=45,angleB=180](1.63,1.25)(1.8,1.4)
\pccurve[angleA=0,angleB=-135](1.8,1)(1.97,1.15)
\pccurve[angleA=45,angleB=180](2.03,1.25)(2.2,1.4)
\psarc(0,0){1.4}{0}{40} \psarc(0,0){1.4}{44}{58} \psarc(0,0){1.4}{62}{90}
\psarc(0,0){1}{0}{43} \psarc(0,0){1}{48}{67} \psarc(0,0){1}{72}{90}
\psarc(-.8,0){1.4}{90}{180} \psarc(-.8,0){1}{90}{180}
\psarc(2.2,0){1.4}{0}{90} \psarc(2.2,0){1}{0}{90}
\psarc(1.4,0){1.4}{90}{180} \psarc(1.4,0){1}{90}{180}
\psarc(1.4,0){1.4}{90}{180} \psarc(1.4,0){1}{90}{180}
\rput(.7,-.5){{$\mathcal{D}$}}
\rput(.7,-1.5){{$(b)$}}
\end{pspicture}
$$
\caption{$2$-banded surfaces of $(a)$ the trefoil knot and $(b)$ the figure eight knot.} \label{2bandsfig}
\end{figure}

\begin{exa}
Let $K_1$ be the trefoil and $K_2$ be the figure eight knot.
Then both have band index $2$ but neither have flat band index $2$.
\label{2bandnot2flat}
\begin{proof}
Figure~\ref{2bandsfig} shows two $2$-banded surfaces whose boundaries are the
trefoil knot and the figure eight knot. To show that these two knots are not flat $2$-banded knot,
we consider Conway polynomials of flat $2$-banded knots.
Any knot with flat band index $2$ has a Seifert matrix $A$ of the form
$$A = \left[\begin{matrix} 0 & k \\
k + 1 & 0 \end{matrix} \right]$$
based on the flat bands where $k$ is an integer.
Its Conway polynomial is then $1-k(k+1)z^2$.
Neither the trefoil nor the
figure eight knot, which both have band index $2$, has a Conway polynomial
of this form because the Conway polynomials of the
trefoil knot and the figure eight knot are $1+z^2$ and $1-z^2$.
\end{proof}
\end{exa}

%I suspect that there are also some restrictions on the Conway polynomial
%of flat $n$ string links for other values of $n$.
%Finally there is a way to convert knotted bands into unknotted ones by
%adding extra bands, along the lines of the method given in the paper for
%dealing with framing, which roughly depend on the bridge index of each
%band.

\begin{figure}
$$
\begin{pspicture}[shift=-2.97](-1.1,-2.3)(1.8,2.3)
\pccurve[doubleline=true, angleA=90,angleB=180,linewidth=1pt](.3,1.5)(.45,1.9)
\pccurve[doubleline=true, angleA=0,angleB=90,linewidth=1pt](.45,1.9)(.6,1.5)
\pccurve[doubleline=true, angleA=90,angleB=180,linewidth=1pt](.9,1.5)(1.05,1.9)
\pccurve[doubleline=true, angleA=0,angleB=90,linewidth=1pt](1.05,1.9)(1.2,1.5)
\pccurve[doubleline=true, angleA=-90,angleB=180,linewidth=1pt](.3,0)(.45,-.4)
\pccurve[doubleline=true, angleA=0,angleB=-90,linewidth=1pt](.45,-.4)(.6,0)
\pccurve[doubleline=true, angleA=-90,angleB=180,linewidth=1pt](.9,0)(1.05,-.4)
\pccurve[doubleline=true, angleA=0,angleB=-90,linewidth=1pt](1.05,-.4)(1.2,0)
\pspolygon[fillcolor=lightgray, fillstyle=solid](0,0)(1.5,0)(1.5,1.5)(0,1.5)(0,0)
\rput(.75,.75){{$B$}}
\rput(-.8,.75){{$K$}}
\rput(-.4,.75){{$=$}}
\end{pspicture} \quad = \quad
\begin{pspicture}[shift=-2.97](-.2,-2.3)(3.1,2.3)
\pccurve[doubleline=true, angleA=90,angleB=180,linewidth=1pt](.3,1.5)(.45,1.9)
\pccurve[doubleline=true, angleA=0,angleB=90,linewidth=1pt](.45,1.9)(.6,1.5)
\pccurve[doubleline=true, angleA=90,angleB=180,linewidth=1pt](.9,1.5)(1.05,1.9)
\pccurve[doubleline=true, angleA=0,angleB=90,linewidth=1pt](1.05,1.9)(1.2,1.5)
\pccurve[doubleline=true, angleA=-90,angleB=90,linewidth=1pt](.3,0)(.3,-1)
\pccurve[doubleline=true, angleA=-90,angleB=90,linewidth=1pt](.6,0)(.6,-1)
\pccurve[doubleline=true, angleA=-90,angleB=90,linewidth=1pt](.9,0)(1.2,-1)
\pccurve[doubleline=true, angleA=-90,angleB=90,linewidth=1pt](1.2,0)(1.5,-1)
\pccurve[doubleline=true, angleA=90,angleB=180,linewidth=1pt](.9,-1)(1.35,-.5)
\pccurve[doubleline=true, angleA=0,angleB=90,linewidth=1pt](1.35,-.5)(1.8,-1)
\pspolygon[fillcolor=lightgray, fillstyle=solid](0,0)(1.5,0)(1.5,1.5)(0,1.5)(0,0)
\pspolygon[fillcolor=lightgray, fillstyle=solid, linecolor=lightgray](0,-1)(2.1,-1)(2.1,-1.8)(0,-1.8)(0,-1)
\psline(2.1,-1)(2.1,-1.8)(0,-1.8)(0,-1)(.272,-1)
\psline(.33,-1)(.572,-1) \psline(.63,-1)(.872,-1) \psline(.93,-1)(1.172,-1) \psline(1.23,-1)(1.472,-1)
\psline(1.53,-1)(1.772,-1) \psline(1.83,-1)(2.11,-1)
\rput(.75,.75){{$B$}}
\rput(1.05,-1.4){{$\mathcal{D}$}}
\end{pspicture}
$$
\caption{The reverse parallel of any two-bridge knot K presented by using
three unknotted bands.} \label{twobridge}
\end{figure}
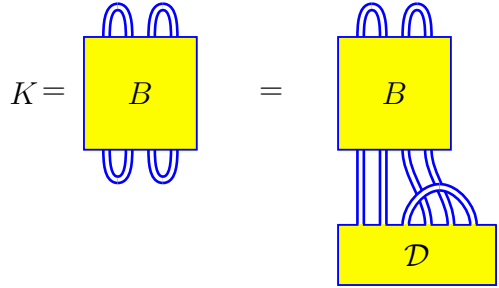

The exact flat band index of the trefoil knot and figure eight knot are given in Example~\ref{flattrefoil}
and~\ref{flatfigureeight}.
As an explicit application, the reverse parallel of a $k$-bridge knot should
have a $(2k-1)$-banded presentation for some framing of the knot. This same
method yields a presentation of the Whitehead double of the trefoil as a flat
$4$-banded knot. This method is illustrated in Example~\ref{bridgeexa}

%For example, a band can be broken in two and an extra band added as
%in Figure~\ref{twotooneisotopy} $(d)$. This does not change the boundary of the banded surface, but
%it gives a way to convert knotted bands into a larger number of unknotted
%ones.

\begin{exa} \label{bridgeexa}
Any two-bridge knot $K$ can be presented as the plat closure of some $4$-
string braid $B$. A reverse parallel of $K$ can then be given by the boundary of a
band along $K$. This technique allow the boundary of
a banded surface with $3$ unknotted bands to be presented as in Figure~\ref{twobridge}.
\end{exa}

\section{Upper bounds for band index and flat band index}\label{bound}

The existence of a banded surface and a flat banded surface of a given link $L$ is proved
by Theorem~\ref{existencethm}. However, it was shown from (knotted) banded diagrams but we could not locate the complete list of links' banded diagrams.
To find a banded surface and a flat banded surface of a given link $L$,
we consider more popular representatives of links:
braid representatives and canonical Seifert surfaces of the link.
Consequently upper bounds for band index and flat band index can be derived from
braid representations and canonical Seifert surfaces using the induced graphs.

\subsection{Banded surfaces and band indices}

An upper bound for the band index of $L$ can be obtained from its canonical Seifert surface as follow.
First some terminology is defined.
A canonical Seifert surface is a Seifert surface $\F$ of a link $L$ obtained by applying Seifert's
algorithm to a link diagram $D(L)$ as depicted in Figure~\ref{stringgraph}.
From such a canonical Seifert surface, a (signed) graph $G(L)$ can be constructed by collapsing discs to vertices
and half twisted bands to signed edges as illustrated in the right
side of Figure~\ref{stringgraph}. This graph is called the \emph{induced graph}
of the canonical Seifert surface of the link $L$. The number of Seifert
circles (half twisted bands), denoted by $s(\F)(c(\F))$, is the
cardinality of the vertex set (edge set, respectively) of the induced graph $G(L)$.
If the Seifert surface $\F$ is connected, its induced graph $G(L)$
is also connected. For terms in graph theory, we refer the readers
to~\cite{GT1}. The number of edges of the
spanning tree of a connected graph with $n$ vertices is $n-1$.
A theorem giving the
upper bound of the band index of $L$ can be derived from its canonical Seifert surface.

\begin{figure}
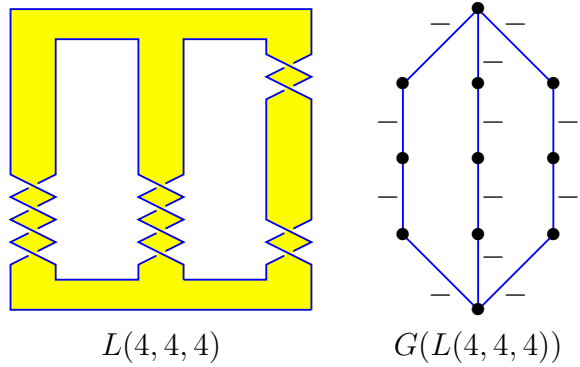

$$
\pspicture[shift=-2.1](-2.1,-2.7)(2.4,2.1)
\pspolygon[linecolor=lightgray,fillstyle=solid,fillcolor=lightgray](-2,-2)(-2,-1.4)
(-1.7,-1.25)(-1.4,-1.4)(-1.4,-1.6)(-.3,-1.6)(-.3,-1.4)(0,-1.25)(.3,-1.4)(.3,-1.6)
(1.4,-1.6)(1.4,-1.4)(1.7,-1.25)(2,-1.4)(2,-2)(-2,-2)
\pspolygon[linecolor=lightgray,fillstyle=solid,fillcolor=lightgray](-2,2)(-2,-.2)
(-1.7,-.35)(-1.4,-.2)(-1.4,1.6)(-.3,1.6)(-.3,-.2)(0,-.35)(.3,-.2)(.3,1.6)
(1.4,1.6)(1.4,1.4)(1.7,1.25)(2,1.4)(2,2)(-2,2)
\pspolygon[linecolor=lightgray,fillstyle=solid,fillcolor=lightgray](-2,-1.1)(-1.7,-1.25)(-1.4,-1.1)(-1.7,-.95)(-2,-1.1)
\pspolygon[linecolor=lightgray,fillstyle=solid,fillcolor=lightgray](-2,-.8)(-1.7,-.95)(-1.4,-.8)(-1.7,-.65)(-2,-.8)
\pspolygon[linecolor=lightgray,fillstyle=solid,fillcolor=lightgray](-2,-.5)(-1.7,-.65)(-1.4,-.5)(-1.7,-.35)(-2,-.5)
\pspolygon[linecolor=lightgray,fillstyle=solid,fillcolor=lightgray](-.3,-1.1)(0,-1.25)(.3,-1.1)(0,-.95)(-.3,-1.1)
\pspolygon[linecolor=lightgray,fillstyle=solid,fillcolor=lightgray](-.3,-.8)(0,-.95)(.3,-.8)(0,-.65)(-.3,-.8)
\pspolygon[linecolor=lightgray,fillstyle=solid,fillcolor=lightgray](-.3,-.5)(0,-.65)(.3,-.5)(0,-.35)(-.3,-.5)
\pspolygon[linecolor=lightgray,fillstyle=solid,fillcolor=lightgray](2,-1.1)(1.7,-1.25)(1.4,-1.1)(1.7,-.95)(2,-1.1)
\pspolygon[linecolor=lightgray,fillstyle=solid,fillcolor=lightgray](2,1.1)(1.7,1.25)(1.4,1.1)(1.7,.95)(2,1.1)
\pspolygon[linecolor=lightgray,fillstyle=solid,fillcolor=lightgray](2,-.8)(1.7,-.95)(1.4,-.8)(1.4,.8)(1.7,.95)(2,.8)(2,-.8)
\psline(2,-2)(-2,-2)(-2,-1.6)(-2,-1.4)(-1.76,-1.28)
\psline(-1.64,-1.22)(-1.4,-1.1)(-2,-.8)(-1.76,-.68)
\psline(-1.64,-.92)(-1.4,-.8)(-2,-.5)(-1.76,-.38)
\psline(-1.64,-.62)(-1.4,-.5)(-2,-.2)(-2,2)(2,2)(2,1.4)(1.76,1.28)
\psline(-1.64,-.32)(-1.4,-.2)(-1.4,1.6)(-.3,1.6)(-.3,-.2)(.3,-.5)(.06,-.62)
\psline(-1.76,-.98)(-2,-1.1)(-1.4,-1.4)(-1.4,-1.6)(-.3,-1.6)(-.3,-1.4)(-.06,-1.28)
\psline(.3,-1.6)(.3,-1.4)(-.3,-1.1)(-.06,-.98)
\psline(.06,-1.22)(.3,-1.1)(-.3,-.8)(-.06,-.68)
\psline(.06,-.92)(.3,-.8)(-.3,-.5)(-.06,-.38)
\psline(.06,-.32)(.3,-.2)(.3,1.6)(1.4,1.6)(1.4,1.4)(2,1.1)(1.76,.98)
\psline(1.64,1.22)(1.4,1.1)(2,.8)(2,-.8)(1.76,-.92)
\psline(1.64,.92)(1.4,.8)(1.4,-.8)(2,-1.1)(1.76,-1.22)
\psline(1.64,-.98)(1.4,-1.1)(2,-1.4)(2,-2)
\psline(1.64,-1.28)(1.4,-1.4)(1.4,-1.6)(.3,-1.6)
\rput(0,-2.5){\rnode{a}{$L(4,4,4)$}}
\endpspicture
\quad
\pspicture[shift=-2.1](-1.4,-2.7)(1,2.1)
\psline(0,2)(0,-2)(1,-1)(1,1)(0,2)(-1,1)(-1,-1)(0,-2)
\rput(0,0){$\bullet$}
\rput(0,1){$\bullet$}
\rput(0,2){$\bullet$}
\rput(0,-1){$\bullet$}
\rput(0,-2){$\bullet$}
\rput(1,1){$\bullet$}
\rput(1,0){$\bullet$}
\rput(1,-1){$\bullet$}
\rput(-1,1){$\bullet$}
\rput(-1,0){$\bullet$}
\rput(-1,-1){$\bullet$}
\rput(.5,1.8){\rnode{a}{$-$}}
\rput(.5,-1.8){\rnode{a}{$-$}}
\rput(-.5,1.8){\rnode{a}{$-$}}
\rput(-.5,-1.8){\rnode{a}{$-$}}
\rput(1.2,.5){\rnode{a}{$-$}}
\rput(1.2,-.5){\rnode{a}{$-$}}
\rput(-1.2,.5){\rnode{a}{$-$}}
\rput(-1.2,-.5){\rnode{a}{$-$}}
\rput(.2,-.5){\rnode{a}{$-$}}
\rput(.2,.5){\rnode{a}{$-$}}
\rput(.2,1.3){\rnode{a}{$-$}}
\rput(.2,-1.3){\rnode{a}{$-$}}
\rput(0,-2.5){\rnode{a}{$G(L(4,4,4))$}}
\endpspicture
$$
\caption{A canonical Seifert surface of a pretzel link $L(4,4,4)$ and its corresponding induced graph $G(L(4,4,4))$.} \label{stringgraph}
\end{figure}

\begin{thm}
Let $\F$ be a canonical Seifert surface of a link diagram $S$ of $L$
with $s(\F)$ Seifert circles and $c(\F)$ half twisted bands. Let $T$
be a spanning tree of its induced graph $G(L)$ which has exactly
$s(\F)-1$ edges. Then there exists a banded surface $B$ of $c(\F)-s(\F)+1$ bands whose boundary is $L$, $i.e.$, the band index of $L$, is less than or equal to
$c(\F)-s(\F)+1.$ \label{stringthm2}
\begin{proof}
Let $\D$ be the disc corresponding to the spanning tree $T$. For any half
twisted band $t(e)$ with a sign $\epsilon(e)$
which is not a part of the disc $\D$, $i. e.,$ an
edge $e$ in $E(G)-T$ in its induced graph $G(L)$ a unique path $W$ can be chosen in the spanning tree $T$ which joins the end vertices of the edge $e$.
Let $k$ be the sign sum of edges in the path $W$. Then $k+\epsilon(e)$
has to be an even integer because $\F$ is an oriented surface.
Let $n_e=\dfrac{k+\epsilon(e)}{2}$. By sliding the half twisted band $t(e)$ which represents
the generators of the homology of the surface, a banded surface $B$ can be obtained
and $n_e$ is the number of full twists in the band presented by $t(e)$.
Since there are $c(\F)-s(\F)+1$ edges in $E(G)-T$, its band index is less than or equal
to $c(\F)-s(\F)+1$.
\end{proof}
\end{thm}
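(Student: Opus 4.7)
The plan is to build the desired banded surface directly from the data of $\F$ by using the spanning tree $T$ to collapse the Seifert circles into a single disc and then recording the remaining half-twisted bands as the bands of $B$.

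First I would pick the spanning tree $T$ of $G(L)$ with its $s(\F)-1$ edges and consider the subsurface $\F_T$ of $\F$ consisting of the $s(\F)$ Seifert discs together with the half-twisted bands corresponding to edges in $T$. Because $T$ is a tree of discs joined by (possibly twisted) bands, $\F_T$ is a connected, simply connected orientable surface, hence isotopic in $\mathbb{R}^3$ to a single disc $\D$. This is the disc of the banded surface that I want to produce.

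Next, for each edge $e\in E(G)-T$, the corresponding half-twisted band $t(e)$ of $\F$ has both ends glued to $\D$ (after the above isotopy). These $c(\F)-s(\F)+1$ bands are exactly the bands of the proposed banded surface $B$; the boundary of $\D$ together with these bands is isotopic to the boundary $L$ of $\F$, since we have only absorbed $\F_T$ into $\D$.

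It remains to verify that each band $t(e)$ with its induced framing presents a legitimate band of a banded surface, i.e., that its framing is an integer number of full twists. Let $\epsilon(e)=\pm 1$ be the sign of $e$, let $W$ be the unique path in $T$ from one endpoint of $e$ to the other, and let $k$ be the signed count of edges along $W$. The closed loop in $G(L)$ obtained from $W$ followed by $e$ bounds a subsurface of $\F$ whose boundary framing, measured against $\D$, equals $k+\epsilon(e)$ half twists; orientability of $\F$ forces this quantity to be even, so $n_e:=(k+\epsilon(e))/2$ is a well-defined integer, and the band corresponding to $e$ carries exactly $n_e$ full twists. Therefore $B$ is a banded surface with $c(\F)-s(\F)+1$ bands and boundary $L$, giving $B(L)\le c(\F)-s(\F)+1$.

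The only real obstacle is the bookkeeping in the last paragraph, namely checking that the isotopy collapsing $\F_T$ to $\D$ really does send each $t(e)$ to a band with $n_e$ full twists; the orientability argument ensuring $k+\epsilon(e)$ is even is the key point that makes the framing consistent.
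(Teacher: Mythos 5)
Your proposal is correct and takes essentially the same approach as the paper: you form the disc $\D$ by collapsing the spanning-tree subsurface, take the $c(\F)-s(\F)+1$ remaining half-twisted bands $t(e)$ as the bands of $B$, and use orientability of $\F$ to show $k+\epsilon(e)$ is even so that each band carries $n_e=(k+\epsilon(e))/2$ full twists. Your brief justification that the tree of discs and bands is simply connected and orientable, hence a disc, merely makes explicit what the paper assumes in the phrase ``the disc corresponding to the spanning tree $T$.''
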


Theorem~\ref{stringthm2} is applied in the calculation of the band index of the pretzel link $L(4,4,4)$ in the following example.

\begin{exa}
The band index of the pretzel link $L(4,4,4)$ depicted in Figure~\ref{stringgraph} is $2$. \label{examp}
\begin{proof}
The shaded region in the figure is a Seifert surface of the link $L(4,4,4)$.
The figure shows that $s(\F)=11, c(\F)=12$. By applying
Theorem~\ref{stringthm2}, the band index of $L$ is less than or equal to
$c(\F)-s(\F)+1 =12-11+1=2$. Recall that links of the band index $1$ have two components. Since $L(4,4,4)$ has $3$ components, it is not a link of the band index $1$ and its band index must be $2$.
\end{proof}
\end{exa}

Example~\ref{examp} demonstrates
the sharp inequality in Theorem~\ref{stringthm2}. One can easily observe that Example~\ref{examp} can be generalized for
all pretzel links of form $L(2p, 2q, 2r)$ that their band index is $2$ by a similar proof.
Later, Theorem~\ref{genusthm1} determines the band index of all $n$-pretzel knots.

A banded surface can be found from the closure of
a braid $\beta$ in a classical Artin group $B_n$. By applying the first Markov move, the conjugation in $B_n$,
the braid $\beta$ can be assumed to be of the form
$\beta = (\sigma_{1}$$\sigma_{2}$$
\ldots $$\sigma_{n-2}$ $\sigma_{n-1})$ $W$.
This braid representative can be used to find the
following upper bound for the band index of the closed braid $\overline{\beta}$ for which all the framings of bands are either $0$ or $1$.

\begin{thm}
Let $L$ be a closed n-braid $\overline{\beta}$
where the braid $\beta$ can be written as $\sigma_{1}$$\sigma_{2}$$
\ldots $$\sigma_{n-2}$ $\sigma_{n-1} W$ and the length of the word $W$ is $m$.
Then the band index of $L$ is less than or equal to $m$, $i. e.$
$B(L)\le m$. \label{stringthm1}
\end{thm}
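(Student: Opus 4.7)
The plan is to obtain this bound as a direct consequence of Theorem~\ref{stringthm2} applied to the canonical Seifert surface of the closed braid diagram of $L = \overline{\beta}$. The idea is that closed braid diagrams are essentially the best possible input to Theorem~\ref{stringthm2}: Seifert's algorithm on a closed $n$-braid gives the simplest possible collection of Seifert circles, and the initial segment $\sigma_1 \sigma_2 \cdots \sigma_{n-1}$ of $\beta$ provides a ready-made spanning tree.

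First I would compute the two invariants $s(\F)$ and $c(\F)$ of the canonical Seifert surface $\F$ associated to the closed braid diagram of $\overline{\beta}$. Since Seifert's algorithm smooths every crossing of a closed braid in a way that respects the braid's downward orientation, the resulting Seifert circles are exactly the $n$ concentric ``strand circles,'' so $s(\F) = n$. Each letter of the braid word (of total length $(n-1)+m$) gives one half-twisted band between two adjacent Seifert circles, so $c(\F) = (n-1)+m$.

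Next I would identify a convenient spanning tree $T$ of the induced graph $G(L)$. The prescribed prefix $\sigma_1 \sigma_2 \cdots \sigma_{n-1}$ contributes one half-twisted band between Seifert circles $i$ and $i+1$ for each $i = 1, \ldots, n-1$. In $G(L)$ these are $n-1$ edges whose endpoints trace out a Hamiltonian path on the $n$ vertices, so they form a spanning tree (and, along the way, confirm that $G(L)$ is connected, so that Theorem~\ref{stringthm2} applies). The remaining edges of $G(L)$ are exactly the $m$ bands coming from the letters of $W$.

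Finally, I would invoke Theorem~\ref{stringthm2} with this spanning tree $T$: the theorem gives
\[
B(L) \le c(\F) - s(\F) + 1 = \bigl((n-1)+m\bigr) - n + 1 = m,
\]
which is the desired inequality. The only point that could cause trouble is the verification that the canonical Seifert surface of a closed braid really has $n$ Seifert circles and that the bands from $\sigma_1, \ldots, \sigma_{n-1}$ genuinely assemble into a spanning path in $G(L)$, but both facts follow from the standard description of Seifert's algorithm on a braid word, so this step is essentially routine.
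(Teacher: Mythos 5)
Your proof is correct, but it takes a genuinely different route from the paper's. The paper does not invoke Theorem~\ref{stringthm2} here at all: it proves the bound by a direct construction, taking the disc $\D$ to be $n$ disjoint disks joined by the $n-1$ half-twisted bands of the prefix $\sigma_{1}\sigma_{2}\cdots\sigma_{n-1}$, attaching one half-twisted band per letter of $W$, and then computing linking numbers to determine the framings. Your derivation as a corollary of Theorem~\ref{stringthm2} is sound: the counts $s(\F)=n$ and $c(\F)=(n-1)+m$ are the standard facts about Seifert's algorithm on a braid closure, the prefix guarantees that $G(L)$ is connected, its $n-1$ edges form a spanning (Hamiltonian) path, and the bound $B(L)\le c(\F)-s(\F)+1=m$ follows; indeed the disc in the paper's proof is exactly the disc that Theorem~\ref{stringthm2}'s proof associates to your spanning tree, so the two arguments build the same banded surface, and your version has the merit of exhibiting Theorem~\ref{stringthm1} as a literal special case of Theorem~\ref{stringthm2}. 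What the paper's direct proof buys, and your corollary-style proof omits, is the framing data: each band coming from a negative letter $\sigma_i^{-1}$ has framing $0$ and each coming from a positive letter $\sigma_i$ has framing $1$. This is announced in the sentence preceding the theorem and is not decorative --- the proof of Theorem~\ref{flatstringthm1} cites Theorem~\ref{stringthm1} precisely for this framing statement when trading each framing-$1$ band for flat bands at the cost of $2s$ extra bands. If you want your derivation to support that later use, observe that the framings also fall out of the machinery of Theorem~\ref{stringthm2}: for an edge $e$ coming from a letter $\sigma_i^{\pm 1}$ of $W$, the tree path joining its endpoints is the single positive prefix edge $\sigma_i$, so $k=1$ and $n_e=\dfrac{k+\epsilon(e)}{2}$ equals $1$ for a positive letter and $0$ for a negative one, recovering exactly the paper's framing claim.
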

\begin{proof}
First select a disc $\D$ which is obtained from $n$ disjoint disks
by attaching $(n-1)$ twisted bands represented by
$\sigma_{1}$$\sigma_{2}$$
\ldots $$\sigma_{n-2}$ $\sigma_{n-1}$. For each letter
in the word $W$, attach a half twisted band to $\D$ which will represent a band
in the desired banded surface.
Counting the linking number shows that each half twisted band corresponding to a negative letter,
$\sigma_i^{-1}$ has framing $0$ and
each half twisted band corresponding to a positive letter, $\sigma_i$ has a framing $1$.
\end{proof}

The figure eight knot demonstrates the sharp inequality in Theorem~\ref{stringthm1} as follow.
The figure eight knot is a closed braid of $\sigma_1^-1 \sigma_2 \sigma_1^-1 \sigma_2$.
In Theorem~\ref{stringthm1}, we may choose $\beta$ of a form  $\sigma_{1}^{-1}$$\sigma_{2}W$ and we obtain
that the band index of the figure eight knot is less than or equal to $2$. Example~\ref{2bandnot2flat} showed that the band index of the figure eight knot is $2$.

\subsection{Flat banded surfaces and flat band indices}

The key ingredients to construct a flat band surface for a given link are Lemma~\ref{flatstringlem1} and
the method to change the sign of a twisted band by adding two flat annuli, illustrated in Figure~\ref{3annuli}, which was
first reported by R. Furihata, M. Hirasawa and T. Kobayashi~\cite{FHK:openbook}.

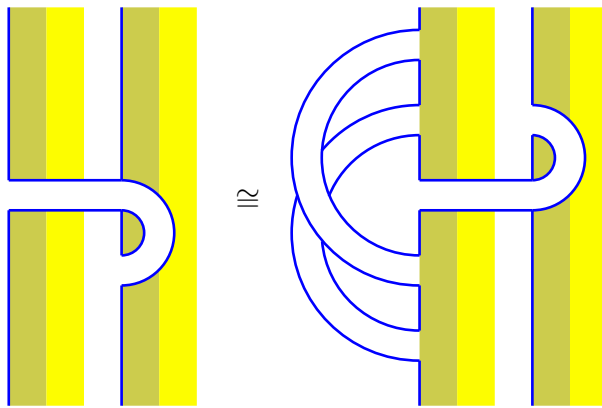
\begin{figure}
$$
\begin{pspicture}[shift=-2.8](-1,-3)(1.9,2.8)
\psframe[linecolor=darkgray,fillstyle=solid,fillcolor=darkgray](1,2.5)(.5,-2.8)
\psframe[linecolor=lightgray,fillstyle=solid,fillcolor=lightgray](1,-2.8)(1.5,2.5)
\pscircle[linecolor=white,fillstyle=solid,fillcolor=white](.5,-.5){.7}
\pscircle[linecolor=darkgray,fillstyle=solid,fillcolor=darkgray](.5,-.5){.3}
\psframe[linecolor=white,fillstyle=solid,fillcolor=white](-.3,-2.8)(.5,2.5)
\psframe[linecolor=darkgray,fillstyle=solid,fillcolor=darkgray](-1,.2)(-.5,2.5)
\psframe[linecolor=darkgray,fillstyle=solid,fillcolor=darkgray](-1,-.2)(-.5,-2.8)
\psframe[linecolor=lightgray,fillstyle=solid,fillcolor=lightgray](0,.2)(-.5,2.5)
\psframe[linecolor=lightgray,fillstyle=solid,fillcolor=lightgray](0,-.2)(-.5,-2.8)
\psline[linewidth=1pt](.5,2.5)(.5,.2)
\psline[linewidth=1pt](.5,-.8)(.5,-.2)
\psline[linewidth=1pt](.5,-1.2)(.5,-2.8)
\psline[linewidth=1pt](-1,2.5)(-1,.2)
\psline[linewidth=1pt](-1,-.2)(-1,-2.8)
\psarc[linewidth=1pt](.5,-.5){.3}{-90}{90}
\psarc[linewidth=1pt](.5,-.5){.7}{-90}{90}
\psline[linewidth=1pt](-1,.2)(.5,.2)
\psline[linewidth=1pt](-1,-.2)(.5,-.2)
\end{pspicture} \cong
\begin{pspicture}[shift=-2.8](-3,-3)(1.5,2.8)
\psframe[linecolor=darkgray,fillstyle=solid,fillcolor=darkgray](1,2.5)(.5,-2.8)
\psframe[linecolor=lightgray,fillstyle=solid,fillcolor=lightgray](1,-2.8)(1.5,2.5)
\pscircle[linecolor=white,fillstyle=solid,fillcolor=white](.5,.5){.7}
\pscircle[linecolor=darkgray,fillstyle=solid,fillcolor=darkgray](.5,.5){.3}
\psframe[linecolor=white,fillstyle=solid,fillcolor=white](-.3,-2.8)(.5,2.5)
\psframe[linecolor=darkgray,fillstyle=solid,fillcolor=darkgray](-1,.2)(-.5,2.5)
\psframe[linecolor=darkgray,fillstyle=solid,fillcolor=darkgray](-1,-.2)(-.5,-2.8)
\psframe[linecolor=lightgray,fillstyle=solid,fillcolor=lightgray](0,.2)(-.5,2.5)
\psframe[linecolor=lightgray,fillstyle=solid,fillcolor=lightgray](0,-.2)(-.5,-2.8)
\psline[linewidth=1pt](.5,2.5)(.5,1.2) \psline[linewidth=1pt](.5,.8)(.5,.2)
\psline[linewidth=1pt](.5,-.2)(.5,-2.8) \psline[linewidth=1pt](-1,2.5)(-1,2.2)
\psline[linewidth=1pt](-1,1.8)(-1,1.2) \psline[linewidth=1pt](-1,.8)(-1,.2)
\psline[linewidth=1pt](-1,-.2)(-1,-.8) \psline[linewidth=1pt](-1,-1.2)(-1,-1.8)
\psline[linewidth=1pt](-1,-2.2)(-1,-2.8)
\psarc[linewidth=1pt](.5,.5){.3}{-90}{90}
\psarc[linewidth=1pt](.5,.5){.7}{-90}{90}
\psline[linewidth=1pt](-1,.2)(.5,.2) \psline[linewidth=1pt](-1,-.2)(.5,-.2)
\psarc[linewidth=1pt](-1,.5){1.3}{90}{270}
\psarc[linewidth=1pt](-1,.5){1.7}{90}{270}
\psarc[linewidth=1pt](-1,-.5){1.3}{90}{158}
\psarc[linewidth=1pt](-1,-.5){1.3}{185}{270}
\psarc[linewidth=1pt](-1,-.5){1.7}{90}{140}
\psarc[linewidth=1pt](-1,-.5){1.7}{163}{270}
\end{pspicture}
$$
\caption{Changing the sign of a twisted band by adding two
flat annuli.} \label{3annuli}
\end{figure}

First, a flat banded surface can be constructed from a closed braid representative of a link $L$.

\begin{thm}
Let $L$ be a closed $n$-braid with a braid
word $\sigma_{1}$$\sigma_{2}$$
\ldots $$\sigma_{n-2}$ $\sigma_{n-1} W$ where the length of
$W$ is $m$ and $W$ has $s$ positive letters, then there exists a
flat banded surface $\F$ with $m + 2s$ bands such that
$\partial \F$ is isotopic to $L$, $i.e.,$ $FB(L)\le m+2s$.
\label{flatstringthm1}
\end{thm}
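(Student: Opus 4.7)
The plan is to start from the banded surface produced by Theorem~\ref{stringthm1} and to flatten each of its non-flat bands using the local move depicted in Figure~\ref{3annuli}.

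First I would apply Theorem~\ref{stringthm1} to the braid word $\sigma_1 \sigma_2 \cdots \sigma_{n-1} W$, obtaining a banded surface $B$ whose boundary is $L$ and whose single disc carries exactly $m$ bands, one for each letter of $W$. From the framing computation in the proof of Theorem~\ref{stringthm1}, the band corresponding to a positive letter $\sigma_i$ has framing $+1$, while the band corresponding to a negative letter $\sigma_i^{-1}$ already has framing $0$. Since $W$ has $s$ positive letters, $B$ contains $s$ non-flat bands and $m-s$ flat bands.

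Next I would apply the Furihata--Hirasawa--Kobayashi move of Figure~\ref{3annuli} once to each of the $s$ non-flat bands. This move is supported in a small neighborhood of a single band; it replaces one band of non-zero framing by three flat bands (namely the original band, now flattened, together with two newly added flat annuli), and leaves the isotopy class of the boundary link unchanged. Iterating this operation over all $s$ non-flat bands produces a banded surface whose bands are all flat and whose boundary is still isotopic to $L$, with a total of $(m-s) + 3s = m+2s$ flat bands. This will give $FB(L) \le m+2s$, as claimed.

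The main point requiring care is that the $s$ local moves should not interfere with one another or with the rest of the surface, and that the newly added annuli should indeed be unknotted and flat. Because each move is carried out in an arbitrarily small tubular neighborhood of the band being flattened, and the two new annuli can be taken to sit inside this neighborhood, the moves can be performed disjointly; and because the annuli inherit the product structure of that neighborhood, they are automatically unknotted and untwisted. Hence the final surface is a genuine flat banded surface for $L$ with $m + 2s$ bands.
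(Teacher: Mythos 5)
Your proposal is correct and follows essentially the same route as the paper's own proof: start from the banded surface of Theorem~\ref{stringthm1}, whose bands have framing $0$ or $1$ with exactly $s$ bands of framing $1$, then flatten each framing-$1$ band by adding two flat annuli via the move of Figure~\ref{3annuli}, for a total of $m+2s$ flat bands. Your count $(m-s)+3s=m+2s$ and your remark that the $s$ moves are local and mutually disjoint are consistent with, and slightly more explicit than, the paper's argument.
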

\begin{proof}
For a link represented by a
closed braid, Theorem~\ref{stringthm1} shows that a banded surface obtained from an $m$-string link
with each framing on the strings is either $0$ or $1$ and that
each string corresponding to a positive letter $\sigma_i$ has a framing $1$.
To replace
this band of framing $1$ by bands of framing $0$,
two extra flat bands can be added as illustrated in the second figure of Figure~\ref{3annuli}.
Once all the framings on the bands are $0$, a flat banded surface results.
The addition of $2s$ extra bands for each positive letter leads the total number of bands
in the flat banded surface to be $m+2s$. It completes the proof.
\end{proof}

The following example shows the sharp inequality in Theorem~\ref{flatstringthm1}.
A flat $4$-banded surface of the trefoil knot is illustrated in Figure~\ref{trefoil4band}.

\begin{figure}
$$
\begin{pspicture}[shift=-1.2](-.7,-1.2)(4.2,1.2)
\psarc[doubleline=true](2.5,0){1}{-5}{185}
\psarc[doubleline=true](2,0){1}{-5}{185}
\psarc[doubleline=true](1.5,0){1}{-5}{185}
\psarc[doubleline=true](1,0){1}{-5}{185}
\psframe[linecolor=lightgray,fillstyle=solid,fillcolor=lightgray](-.5,-1)(4,0)
\psline(-.03,0)(-.5,0)(-.5,-1)(4,-1)(4,0)(3.53,0)
\psline(.03,0)(.47,0) \psline(.53,0)(.97,0) \psline(1.03,0)(1.47,0)
\psline(1.53,0)(1.97,0) \psline(2.03,0)(2.47,0) \psline(2.53,0)(2.97,0)
\psline(3.03,0)(3.47,0)
\rput(1.75,-.5){{$\mathcal{D}$}}
\end{pspicture}
$$
\caption{A flat $4$-banded surface of the trefoil knot.} \label{trefoil4band}
\end{figure}
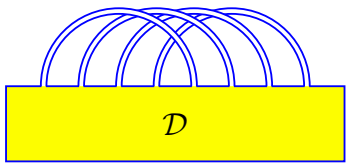

\begin{exa} \label{flattrefoil}
The flat band index of the trefoil knot is $4$.
\begin{proof}
Example~\ref{2bandnot2flat} shows that the flat band index of the trefoil knot is greater than $2$.
Since the trefoil knot is a $2$ string closed braid $\overline{(\sigma_1)^{-3}}$, we rewrite $(\sigma_1)^{-3}$ by a braid
word $\sigma_{1}$ $(\sigma_1)^{-4}$ to apply Theorem~\ref{flatstringthm1}. The braid word $W=(\sigma_1)^{-4}$ is of the length $4$ and
has no positive letters. By Theorem~\ref{flatstringthm1}, the flat band index of the trefoil knot is $\le 4$.
However, the number of components of the boundary of an $n$-banded surface is always congruent to $n+1$ modulo $2$ and
the flat band index of the trefoil knot can not be $3$. Therefore, it is $4$.
\end{proof}
\end{exa}

The upper bound of the flat band index of a link $L$ can be found from its canonical Seifert surfaces.
Obtaining a flat banded surface requires a careful choice of a disk $\D$. The spanning tree of the
induced graph of a closed braid is a path. Therefore, there is
no ambiguity about the choice of a spanning tree for a closed braid.
This is not necessarily so for a general canonical Seifert surface.
However for the choice  of any spanning tree, an alternative label on the tree
with respect to its depth from an arbitrary prefixed root satisfies the desired property,
as explained in the following lemma.

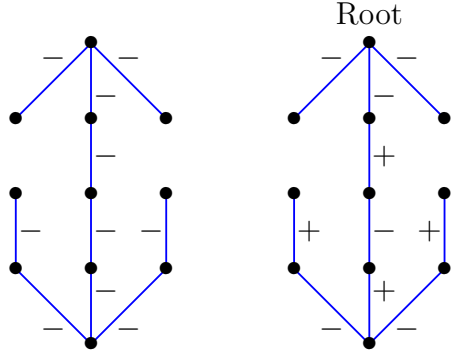
\begin{figure}
$$
\begin{pspicture}[shift=-2](-1.1,-2)(1.1,2.5)
\psline(0,2)(0,-2)(1,-1)(1,0) \psline(1,1)(0,2)(-1,1) \psline(-1,0)(-1,-1)(0,-2)
\rput(0,0){$\bullet$}
\rput(0,1){$\bullet$}
\rput(0,2){$\bullet$}
\rput(0,-1){$\bullet$}
\rput(0,-2){$\bullet$}
\rput(1,1){$\bullet$}
\rput(1,0){$\bullet$}
\rput(1,-1){$\bullet$}
\rput(-1,1){$\bullet$}
\rput(-1,0){$\bullet$}
\rput(-1,-1){$\bullet$}
\rput(.5,1.8){\rnode{a}{$-$}}
\rput(.5,-1.8){\rnode{a}{$-$}}
\rput(-.5,1.8){\rnode{a}{$-$}}
\rput(-.5,-1.8){\rnode{a}{$-$}}
\rput(.8,-.5){\rnode{a}{$-$}}
\rput(-.8,-.5){\rnode{a}{$-$}}
\rput(.2,-.5){\rnode{a}{$-$}}
\rput(.2,.5){\rnode{a}{$-$}}
\rput(.2,1.3){\rnode{a}{$-$}}
\rput(.2,-1.3){\rnode{a}{$-$}}
\end{pspicture}
\hskip 1.5cm
\begin{pspicture}[shift=-2](-1.1,-2)(1.1,2.5)
\psline(0,2)(0,-2)(1,-1)(1,0) \psline(1,1)(0,2)(-1,1) \psline(-1,0)(-1,-1)(0,-2)
\rput(0,0){$\bullet$}
\rput(0,1){$\bullet$}
\rput(0,2){$\bullet$}
\rput(0,-1){$\bullet$}
\rput(0,-2){$\bullet$}
\rput(1,1){$\bullet$}
\rput(1,0){$\bullet$}
\rput(1,-1){$\bullet$}
\rput(-1,1){$\bullet$}
\rput(-1,0){$\bullet$}
\rput(-1,-1){$\bullet$}
\rput(.5,1.8){\rnode{a}{$-$}}
\rput(.5,-1.8){\rnode{a}{$-$}}
\rput(-.5,1.8){\rnode{a}{$-$}}
\rput(-.5,-1.8){\rnode{a}{$-$}}
\rput(.8,-.5){\rnode{a}{$+$}}
\rput(-.8,-.5){\rnode{a}{$+$}}
\rput(.2,-.5){\rnode{a}{$-$}}
\rput(.2,.5){\rnode{a}{$+$}}
\rput(.2,1.3){\rnode{a}{$-$}}
\rput(.2,-1.3){\rnode{a}{$+$}}
\rput(0,2.4){\rnode{a}{$\mathrm{Root}$}}
\end{pspicture} $$
\caption{A spanning tree and its an alternating sign on the spanning tree with a root for
the induced graph $G(L(4,4,4))$ in Figure~\ref{stringgraph}.} \label{stringgraphspantree}
\end{figure}

\begin{lem}
Let $G$ be the induced graph of a Seifert surface. Let $T$ be a
spanning tree with alternating signing with respect to its depth
as depicted in Figure~\ref{stringgraphspantree}.
Let $e$ be an edge in $E(G)-T$, the sign sum of the
simple path in $T$ which joins the end points of $e$ is either $1$ or $-1$.
\label{flatstringlem1}
\end{lem}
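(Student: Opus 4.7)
The plan is to make the alternating signing precise and then reduce the claim to a short telescoping computation plus a parity argument. After choosing a root of $T$, I assign to each tree edge $f$ the sign $\epsilon(f) = (-1)^{d+1}$, where $d$ is the depth of the upper endpoint of $f$; so all edges incident to the root receive sign $-1$, the edges one level below receive $+1$, and so on, matching the convention depicted in Figure~\ref{stringgraphspantree}. Given $e \in E(G) - T$ with endpoints $u$ and $v$, let $\ell$ be their least common ancestor in $T$, with depths $d_u$, $d_v$, and $d_\ell$.

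Next I would compute the sign sum $k$ along the unique simple $T$-path from $u$ to $v$, splitting it at $\ell$ into a $u$-leg and a $v$-leg. The $u$-leg visits edges whose upper-endpoint depths are $d_u - 1, d_u - 2, \ldots, d_\ell$, so its signed contribution is the telescoping sum
\[
\sum_{j = d_\ell}^{d_u - 1} (-1)^{j+1},
\]
which collapses to $0$ when $d_u - d_\ell$ is even and to $(-1)^{d_\ell + 1}$ when $d_u - d_\ell$ is odd. The analogous statement holds for the $v$-leg. Hence $k$ must lie in $\{\,0,\ (-1)^{d_\ell + 1},\ 2(-1)^{d_\ell + 1}\,\}$.

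The remaining task is to exclude the even values $0$ and $\pm 2$. For this I reuse the orientability argument invoked in the proof of Theorem~\ref{stringthm2}: if $\epsilon_0(\cdot)$ denotes the original Seifert signs, then orientability of $\F$ forces $k_0 + \epsilon_0(e) \equiv 0 \pmod 2$ for the sign sum $k_0$ of the same path taken with those original signs. Since $\epsilon_0(e) = \pm 1$, the integer $k_0$ is odd; and because $k_0$ is a sum of $\pm 1$'s, the path length $|P|$ itself is odd. But $|P| = (d_u - d_\ell) + (d_v - d_\ell)$, so exactly one of $d_u - d_\ell$ and $d_v - d_\ell$ is odd. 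Precisely one leg then contributes $(-1)^{d_\ell + 1}$ and the other contributes $0$, forcing $k = \pm 1$ as required.

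I do not expect a genuine obstacle. The only subtle point is noticing that the parity constraint produced by orientability, although phrased for the original Seifert signs, transfers verbatim to the alternating signing on $T$, since the parity of a signed sum of unit terms depends only on the number of terms. One could alternatively try to argue directly that the induced graph is bipartite, but routing the parity through Theorem~\ref{stringthm2} is cleaner and keeps the lemma self-contained within the framework already established.
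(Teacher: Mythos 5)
Your proposal is correct and takes essentially the same approach as the paper: orientability of the Seifert surface forces the tree path to have odd length, and splitting it at its apex into two depth-monotone runs of alternating signs, exactly one of which has odd length, yields a sign sum of $\pm 1$. Your telescoping computation at the least common ancestor is just a more precise rendering of the paper's case analysis of whether the path passes the root, with the same orientability parity input borrowed from the proof of Theorem~\ref{stringthm2}.
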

\begin{proof}
For any two points in a tree, there exists a unique simple path between them.
Let $e$ be an edge in $E(G)-T$. Let $P$ be the unique simple path in $T$
which joins the endpoints of $e$. Because of the orientability of the Seifert surface, the length of this simple path must be odd and
the sum of the signs on the simple path must also be odd.
Furthermore, if the path $P$ does not pass the root, then the sum of the odd number of the alternating signs must be either $1$ or $-1$
depending on the parity of the length of $P$.
If $P$ passes the root, the definitions of the tree and the alternating sign show that
it is a union of two paths of alternating signs: one which starts from the root and the other which ends at the root. However,
one has odd length and the other has even length, possibly zero.
Therefore, the sign sum of any path $P$ joining the end points of an edge $e$ in
$E(G)-T$ is either $1$ or $-1$.
\end{proof}

Lemma~\ref{flatstringlem1} leads to the following minimum.
If the sign of an edge $e$ in $T$ does not coincide with the
sign of the edge in the alternating sign, then
the link must be isotoped by a type II Reidemeister move as shown in the left side of
Figure~\ref{stringgraphspandisc}. Since the signs of all the edges in the spanning tree $T$ can be reversed,
the total number of type II Reidemeister moves in the
process is less than or equal to $\left\lceil
\dfrac{s(S)-1}{2}\right\rceil$. Let $\beta$ be the total
number of type II Reidemeister moves in the process described
above. Set $\D$ as the disc corresponding to the spanning tree
$T$ as depicted in the right side of Figure~\ref{stringgraphspandisc}.
For each edge $e$ in $E(G)-T$, if the sign of the edge is
different from the sign sum of the edges in the path $P$ which joins
the endpoints of $e$, then the framing of the band presented by the edge $e$ is zero
because the linking number of $\alpha^+$ and $\alpha$ which is a union of the line representing $e$ and
the curve in $\D$ corresponds to the path $P$ in the spanning tree $T$.
Otherwise, three flat bands must be added to make the half twisted
band presented by the edge $e$.
Although two new edges are introduced by the type II Reidemeister move,
the sign of one edge is
different from the sign sum of the edges in the path $P$ which joins
the endpoints of $e$ and the sign of the other edge coincides. Therefore, there will be a total $4\beta$ bands
arising from
the type II Reidemeister moves.
Let $\gamma$ be the total
number of edges in $E(G)-T$ whose signs and
sign sums of the edges in the paths which join the endpoints of the
edges are the same and contribute $2\gamma$ extra bands to obtain a flat banded surface.
Since there are $c(S)-s(S)+1$ edges in $E(G)-T$, summarizing these facts, leads to the following theorem.

\begin{thm}
Let $\F$ be a canonical Seifert surface of a link $L$ with $s(S)$
Seifert circles and $c(S)$ half twisted bands. Let $G$ be the
induced graph of a Seifert surface $\F$. Let $T$ be a spanning tree
with alternating signing with respect to the depth of the tree.
Let $\beta$ and $\gamma$ be as described above. Then the
flat band index of $L$ is bounded by $c(S)-s(S)+ 1+  4\beta +2\gamma $, $i. e.$,
$$FB(L)\le c(S)-s(S)+ 1+  4\beta +2\gamma. $$ \label{flatstringthm3}
\end{thm}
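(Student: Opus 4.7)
The plan is to build the flat banded surface step by step, guided by the spanning tree $T$ with its alternating sign labelling, and to use the lemma~\ref{flatstringlem1} and the three–annuli trick of Figure~\ref{3annuli} to convert every framed band into flat bands while keeping careful track of the number added.

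First I would address the discrepancy between the actual sign $\epsilon(e)$ of each tree edge $e \in T$ and the sign prescribed by the alternating labelling. Whenever these disagree I perform a type~II Reidemeister move on the corresponding half–twisted band; since each such move resolves one disagreement on the tree, the total number $\beta$ of moves needed is at most $\lceil (s(S)-1)/2\rceil$. After these moves the spanning tree $T$ carries exactly the alternating signing with respect to the chosen root, and I let $\D$ be the disc obtained by gluing the $s(S)-1$ half–twisted bands corresponding to $T$. Each type~II move, however, enlarges the graph: it introduces two extra edges of opposite sign, which must be accounted for when we apply the construction to non–tree edges.

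Next I would process each non–tree edge $e\in E(G)-T$ (including the newly created ones). By Lemma~\ref{flatstringlem1}, the sign sum of the unique $T$–path $P$ joining the endpoints of $e$ is $\pm 1$. Exactly as in the proof of Theorem~\ref{stringthm2}, sliding the half–twisted band $t(e)$ along $P$ gives a band attached to $\D$ whose framing equals $(k+\epsilon(e))/2$, where $k$ is the sign sum along $P$. When $\epsilon(e)=-k$ this framing is $0$ and $e$ contributes a single flat band. When $\epsilon(e)=k$ the framing is $\pm 1$, so I replace the framed band by three flat bands using the Furihata–Hirasawa–Kobayashi move of Figure~\ref{3annuli}, at a cost of $2$ extra bands over the naive count. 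Letting $\gamma$ denote the number of original non–tree edges of this second type, these contribute $2\gamma$ additional bands.

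Finally, I tally. Of the two edges produced by each of the $\beta$ Reidemeister~II moves, one has the matching sign and one does not, so every such move contributes $1+3=4$ bands, for a total of $4\beta$. The original non–tree edges number $c(S)-s(S)+1$ and contribute one band each, plus $2$ extra bands for each of the $\gamma$ problematic ones. Adding these up yields a flat banded surface with
\[
\bigl(c(S)-s(S)+1\bigr) + 2\gamma + 4\beta
\]
bands whose boundary is $L$, giving the claimed inequality $FB(L)\le c(S)-s(S)+1+4\beta+2\gamma$.

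The main obstacle I expect is bookkeeping rather than conceptual: I must verify carefully that the type~II Reidemeister moves really do toggle the sign of a single tree edge without disturbing the alternating labelling elsewhere, and that the two new edges they produce behave as claimed (one matching, one opposing the path sign sum after reduction along $T$). Checking the orientation-compatibility so that the parity argument of Lemma~\ref{flatstringlem1} remains valid for the enlarged graph, and confirming that the three–annuli replacement of Figure~\ref{3annuli} preserves flatness of all other bands simultaneously, will require the most care.
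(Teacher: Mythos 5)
Your proposal is correct and follows essentially the same route as the paper: type~II Reidemeister moves to impose the alternating signing on the spanning tree (at most $\left\lceil (s(S)-1)/2\right\rceil$ of them), the disc $\D$ built from $T$, Lemma~\ref{flatstringlem1} to pin the path sign sum at $\pm 1$ so each non-tree edge yields framing $0$ or $\pm 1$, and the three-annuli move of Figure~\ref{3annuli} to flatten the framed bands, with the identical bookkeeping of $4$ bands per Reidemeister move and $2$ extra bands per coinciding-sign edge. Even the subtlety you flag about the two new edges created by each type~II move (one matching, one opposing the path sign sum) is handled in the paper exactly as you propose.
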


Theorem~\ref{flatstringthm3} is directly applied to find an upper bound for the flat band index of the pretzel link $L(4,4,4)$
in the following example.

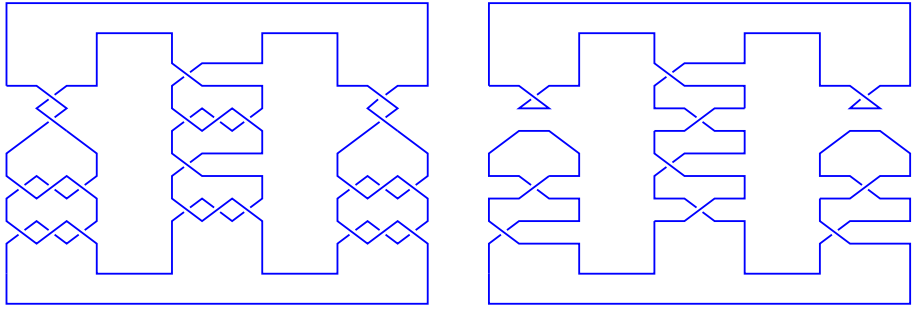
\begin{figure}
$$
\begin{pspicture}[shift=-2](-3,-2.1)(3,2.1)
\psline(-2.8,.9)(-2.8,2)(2.8,2)(2.8,.9)(2.4,.9)(2.24,.78)
\psline(2.14,.72)(2,.6)(2.4,.3)
\psline(2.24,.48)(2.4,.6)(2,.9)(1.6,.9)(1.6,1.6)(.6,1.6)(.6,1.2)(-.2,1.2)(-.36,1.08)
\psline(-.44,1.02)(-.6,.9)(-.6,.6)(-.2,.3)(.2,.6)(.6,.3)(.6,0)(-.2,0)(-.36,-.12)
\psline(-.44,-.18)(-.6,-.3)(-.6,-.6)(-.2,-.9)(.2,-.6)(.6,-.9)(.6,-1.6)(1.6,-1.6)(1.6,-1.2)(1.76,-1.08)
\psline(1.84,-1.02)(2,-.9)(2.16,-1.02) \psline(2.24,-1.08)(2.4,-1.2)(2.56,-1.08)
\psline(2.64,-1.02)(2.8,-.9)(2.8,-.6)(2.4,-.3)(2,-.6)(1.6,-.3)(1.6,0)(2,.3)(2.16,.42)
\psline(2.4,.3)(2.8,0)(2.8,-.3)(2.64,-.42)
\psline(2.56,-.48)(2.4,-.6)(2.24,-.48) \psline(2.16,-.42)(2,-.3)(1.84,-.42)
\psline(1.76,-.48)(1.6,-.6)(1.6,-.9)(2,-1.2)(2.4,-.9)(2.8,-1.2)(2.8,-1.6)(2.8,-2)(-2.8,-2)(-2.8,-1.6)
\psline(-2.8,-1.6)(-2.8,-1.2)(-2.64,-1.08)
\psline(-2.56,-1.02)(-2.4,-.9)(-2.24,-1.02) \psline(-2.16,-1.08)(-2,-1.2)(-1.84,-1.08)
\psline(-1.76,-1.02)(-1.6,-.9)(-1.6,-.6)(-2,-.3)(-2.4,-.6)(-2.8,-.3)(-2.8,0)(-2.4,.3)(-2.24,.42)
\psline(-2,.3)(-1.6,0)(-1.6,-.3)(-1.76,-.42)
\psline(-1.84,-.48)(-2,-.6)(-2.16,-.48) \psline(-2.24,-.42)(-2.4,-.3)(-2.56,-.42)
\psline(-2.64,-.48)(-2.8,-.6)(-2.8,-.9)(-2.4,-1.2)(-2,-.9)(-1.6,-1.2)(-1.6,-1.6)(-.6,-1.6)(-.6,-.9)(-.44,-.78)
\psline(-.36,-.72)(-.2,-.6)(-.04,-.72) \psline(.04,-.78)(.2,-.9)(.36,-.78)
\psline(.44,-.72)(.6,-.6)(.6,-.3)(-.2,-.3)(-.6,0)(-.6,.3)(-.44,.42)
\psline(-.36,.48)(-.2,.6)(-.04,.48) \psline(.04,.42)(.2,.3)(.36,.42)
\psline(.44,.48)(.6,.6)(.6,.9)(-.2,.9)(-.6,1.2)(-.6,1.6)(-1.6,1.6)(-1.6,.9)(-2,.9)(-2.16,.78)
\psline(-2.24,.72)(-2.4,.6)(-2,.3) \psline(-2.16,.48)(-2,.6)(-2.4,.9)(-2.8,.9)
\end{pspicture} \quad \begin{pspicture}[shift=-2](-3,-2.1)(3,2.1)
\psline(-2.8,.9)(-2.8,2)(2.8,2)(2.8,.9)(2.4,.9)(2.24,.78)
\psline(2.14,.72)(2,.6)(2.4,.6)(2,.9)(1.6,.9)(1.6,1.6)(.6,1.6)(.6,1.2)(-.2,1.2)(-.36,1.08)
\psline(-.44,1.02)(-.6,.9)(-.6,.6)(-.2,.6)(-.04,.48) \psline(-.6,.3)(-.2,.3)(.2,.6)(.6,.6)
\psline(.04,.42)(.2,.3)(.6,.3)(.6,0)(-.2,0)(-.36,-.12)
\psline(-.44,-.18)(-.6,-.3)(-.6,-.6)(-.2,-.6)(-.04,-.72)
\psline(.04,-.78)(.2,-.9)(.6,-.9)(.6,-1.6)(1.6,-1.6)(1.6,-1.2)(1.76,-1.08)
\psline(1.84,-1.02)(2,-.9)(2.8,-.9)(2.8,-.6)(2.4,-.6)(2.24,-.48)
\psline(2.16,-.42)(2,-.3)(1.6,-.3)(1.6,0)(2,.3)(2.4,.3)(2.8,0)(2.8,-.3)(2.4,-.3)(2,-.6)(1.6,-.6)
\psline(1.6,-.6)(1.6,-.9)(2,-1.2)(2.8,-1.2)(2.8,-1.6)(2.8,-2)(-2.8,-2)(-2.8,-1.6)
\psline(-2.8,-1.6)(-2.8,-1.2)(-2.64,-1.08)
\psline(-2.56,-1.02)(-2.4,-.9)(-1.6,-.9)(-1.6,-.6)(-2,-.6)(-2.16,-.48)
\psline(-2.24,-.42)(-2.4,-.3)(-2.8,-.3)(-2.8,0)(-2.4,.3)(-2,.3)(-1.6,0)(-1.6,-.3)(-2,-.3)
\psline(-2,-.3)(-2.4,-.6)(-2.8,-.6)(-2.8,-.9)(-2.4,-1.2)(-1.6,-1.2)(-1.6,-1.6)(-.6,-1.6)(-.6,-.9)
\psline(-.6,-.9)(-.2,-.9)(.2,-.6)(.6,-.6)(.6,-.3)(-.2,-.3)(-.6,0)(-.6,.3)
\psline(.6,.6)(.6,.9)(-.2,.9)(-.6,1.2)(-.6,1.6)(-1.6,1.6)(-1.6,.9)(-2,.9)(-2.16,.78)
\psline(-2.24,.72)(-2.4,.6)(-2,.6)(-2.4,.9)(-2.8,.9)
\end{pspicture}
$$\caption{A modified link diagram from the alternating signs in Figure~\ref{stringgraphspantree}
and a disc corresponding to the spanning tree with
the alternating signs.} \label{stringgraphspandisc}
\end{figure}

\begin{exa} \label{flatL444}
The flat band index of the pretzel link $L(4,4,4)$ is $\le 22$.
\begin{proof}
To apply Theorem~\ref{flatstringthm3}, the diagram of $L(4,4,4)$ is first modified as illustrated in the left side of Figure~\ref{stringgraphspandisc}.
Then consider the disc $\D$ as shown in the right side of Figure~\ref{stringgraphspandisc}.
this leads to $c(S)=12, s(S)=11$, $\beta=4$ and $\gamma=2$.
By applying Theorem~\ref{flatstringthm3}, $FB(L(4,4,4) \le 12-11+1+ 4 \cdot 4 + 2 \cdot 2 =22$.
\end{proof}
\end{exa}

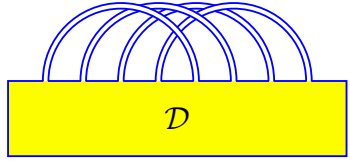
\begin{figure}
$$
\begin{pspicture}[shift=-1.2](-.7,-1.2)(4.2,1.2)
\psarc[doubleline=true](2,0){1}{-5}{185}
\psarc[doubleline=true](2.5,0){1}{-5}{185}
\psarc[doubleline=true](1.5,0){1}{-5}{185}
\psarc[doubleline=true](1,0){1}{-5}{185}
\psframe[linecolor=lightgray,fillstyle=solid,fillcolor=lightgray](-.5,-1)(4,0)
\psline(-.03,0)(-.5,0)(-.5,-1)(4,-1)(4,0)(3.53,0)
\psline(.03,0)(.47,0) \psline(.53,0)(.97,0) \psline(1.03,0)(1.47,0)
\psline(1.53,0)(1.97,0) \psline(2.03,0)(2.47,0) \psline(2.53,0)(2.97,0)
\psline(3.03,0)(3.47,0)
\rput(1.75,-.5){{$\mathcal{D}$}}
\end{pspicture}
$$
\caption{A flat $4$-banded surface of the figure eight knot.} \label{figure84band}
\end{figure}

The following example shows the inequality in Theorem~\ref{flatstringthm1} and Theorem~\ref{flatstringthm3}
fail to achieve the equality for the figure eight knot.

\begin{exa} \label{flatfigureeight}
The flat band index of the figure eight knot is $4$.
\begin{proof}
Since the figure knot is a $3$ string closed braid $\overline{(\sigma_1^{-1}\sigma_2 )^2}$,
we ruse it to apply Theorem~\ref{flatstringthm1}
but we choose the disc $\D$ presented by $\sigma_{1}^{-1}\sigma_2$ instead of $\sigma_{1}\sigma_2$ in the theorem.
The braid word $W=\sigma_{1}^{-1}\sigma_2$ is of the length $2$ and
each one has the same sign of the word representing the disc, $i.e.$, $s=2$. By Theorem~\ref{flatstringthm1}, the flat band index of the trefoil knot is $\le 2+2 \cdot 2 =6$.

To apply Theorem~\ref{flatstringthm3}, we consider the standard diagram of the figure eight knot
and its canonical Seifert surface.
Then consider the disc $\D$ which is obtained from three Seifert discs attached by any pair of half twisted
bands of different signs, this leads to $c(S)=4, s(S)=3$, $\beta=0$ and $\gamma=2$.
By applying Theorem~\ref{flatstringthm3}, the flat band index of the figure eight knot is $\le 4-3+1+ 4 \cdot 0 + 2 \cdot 2 =6$.

Example~\ref{2bandnot2flat} shows that the flat band index of the figure eight knot is greater than $2$.
Since the boundary of an $n$-banded surface has at most $n+1$ components, and the number of components is always congruent to $n+1$ modulo $2$, the flat index of the figure eight knot has to be either $4$ or $6$.
However, a flat $4$-banded surface of the figure eight knot is illustrated in Figure~\ref{figure84band}.
\end{proof}
\end{exa}

\section{Relationship between band indices and genera of links}
\label{relation}

Band index can be related with one of the classical link
invariants. First recall some definitions. The \emph{genus} of
a link $L$, denoted by $g(L)$, is the minimal genus among all its Seifert
surfaces. A Seifert surface $\F$ of $L$
with the minimal genus $g(L)$ is called a \emph{minimal genus
Seifert surface} of $L$. A Seifert surface of $L$ is
\emph{canonical} if it can be obtained from a diagram of $L$ by applying
Seifert's algorithm. The minimal genus among all canonical
Seifert surfaces of $L$ is the \emph{canonical genus} of $L$,
denoted by $g_c(L)$. A Seifert surface $\F$ of $L$ is
\emph{free} if the fundamental group of the complement of $\F$,
$\pi_1(\mathbb{S}^3 - \F)$ is a free group. The minimal
genus among all the free Seifert surfaces of $L$ is the
\emph{free genus} of $L$, denoted by $g_f (L)$. Since any canonical
Seifert surface is free, the following inequalities arise.
$$g(L)\le g_f(L) \le g_c(L).$$
These inequalities can be used to obtain many useful results~\cite{Brittenham:free, crowell:genera, KK, Moriah,
Nakamura, sakuma:minimal}.
The band index first requires the following lemma.
\begin{lem}
Let $L$ be a link, let $l$ be the number of components of $L$. Then
the band index of $L$ is bounded as,
$$ 2g(L)+ l-1  \le
B(L) \le 2g_c(L) +l-1.$$ \label{genuslem1}
\end{lem}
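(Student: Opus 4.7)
The plan is to establish the two inequalities independently, using Euler characteristic computations throughout. The key conceptual observation is that a banded surface with one disc and $n$ bands is automatically a connected orientable Seifert surface of its boundary link, with Euler characteristic $1-n$.

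For the upper bound, I would begin with a canonical Seifert surface $\F$ of $L$ realizing $g(\F) = g_c(L)$. Since $\F$ is connected with $l$ boundary components, $\chi(\F) = 2 - 2g_c(L) - l$. On the other hand, by the construction of $\F$ via Seifert's algorithm, $\F$ deformation retracts onto the induced graph $G(L)$, so $\chi(\F) = s(\F) - c(\F)$. Equating these expressions yields
$$c(\F) - s(\F) + 1 = 2g_c(L) + l - 1,$$
and applying Theorem~\ref{stringthm2} gives the desired upper bound $B(L) \le 2g_c(L) + l - 1$.

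For the lower bound, let $B$ be a banded surface of $L$ realizing the band index, so $B$ consists of a single disc $\D$ with $n = B(L)$ integer-framed bands attached. Since each band carries an integer framing (an even number of half twists), each band is an annulus rather than a M\"obius band, so $B$ is orientable; it is also connected, since all bands emanate from a single disc. Thus $B$ is a (possibly non-minimal) Seifert surface of $L$. Viewing $B$ as $\D$ with $n$ 1-handles attached, $\chi(B) = 1 - n$, and combining this with $\chi(B) = 2 - 2g(B) - l$ gives $g(B) = (n - l + 1)/2$. Since $g(L) \le g(B)$, we conclude $B(L) = n \ge 2g(L) + l - 1$.

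The only point that requires any care is confirming that a banded surface qualifies as a Seifert surface of its boundary: connectedness is immediate from having a single disc, and orientability follows from the integer framings. Once these are in hand, the argument reduces to routine Euler characteristic bookkeeping on each side, and no serious obstacle arises.
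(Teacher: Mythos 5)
Your proof is correct and takes essentially the same route as the paper: the upper bound via Theorem~\ref{stringthm2} together with an Euler characteristic count giving $c(\F)-s(\F)+1=2g_c(L)+l-1$, and the lower bound from the observation that a banded surface is a Seifert surface of its boundary. The only cosmetic difference is bookkeeping --- the paper runs the count through Euler's formula for a minimal canonical embedding of the induced graph $G(L)$ (with the faces implicitly identified with the $l$ boundary components), while you compute $\chi(\F)$ directly from the deformation retraction onto $G(L)$, and you usefully make explicit the connectedness and orientability checks that the paper compresses into a single sentence.
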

\begin{proof}
Let $V,E$ and $F$ be the numbers of vertices, edges and faces,
respectively in a minimal canonical embedding of $G(L)$.  From
Theorem~\ref{stringthm2}, $B(L)\le
c(\F)-s(\F)+1=E-V+1=(E-V-F)+F+1=2g_c(L)-2+F+1=2g_c(L)+F-1.$ Since a band surface is
a Seifert surface of $L$, the first
inequality follows from the definition of the genus of $L$.
\end{proof}

Consequently, the following theorem arises.
\begin{thm} \label{genusthm1}
If $L$ is a link of $l$ components and its minimal genus surface of
genus $g(L)$ can be obtained by applying Seifert algorithm on a
diagram of $L$, $i. e.$, $g(L)=g_c(L)$, then $B(L)=2g(L)+ l-1$.
\end{thm}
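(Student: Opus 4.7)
The plan is to read the conclusion directly off the sandwich of inequalities already established in Lemma~\ref{genuslem1}. That lemma provides
\[
2g(L) + l - 1 \le B(L) \le 2g_c(L) + l - 1,
\]
and the hypothesis $g(L) = g_c(L)$ collapses the two bounds onto the single value $2g(L) + l - 1$, forcing $B(L)$ to equal it. In this sense there is nothing left to prove; the content of the theorem is really the observation that the hypothesis on $L$ is exactly what makes the lemma's two bounds coincide.

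Still, it is worth recording where each inequality comes from, since the argument depends on having both in place. For the upper bound I would start from a canonical Seifert surface $\F$ of $L$ realising the canonical genus, so that $\chi(\F) = s(\F) - c(\F) = 2 - 2g_c(L) - l$, and then apply Theorem~\ref{stringthm2} to produce a banded surface with $c(\F) - s(\F) + 1 = 2g_c(L) + l - 1$ bands. For the lower bound I would note that every banded surface of $L$ is itself a connected orientable Seifert surface: an $n$-banded surface has one disc and $n$ bands, hence Euler characteristic $1 - n$, so its genus is $(n - l + 1)/2$ (the parity condition on $n$ versus $l$ guarantees this is an integer). Therefore $n \ge 2g(L) + l - 1$ for any banded surface of $L$, and thus $B(L) \ge 2g(L) + l - 1$.

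Under the hypothesis $g(L) = g_c(L)$ these two inequalities pinch together, and I would simply conclude $B(L) = 2g(L) + l - 1$ as their immediate consequence. There is no real obstacle here, because both the constructive upper bound from Theorem~\ref{stringthm2} and the Euler-characteristic lower bound on the number of bands are already in hand; the only thing to verify, as a sanity check, is that the parity conditions are compatible, which follows from the standing fact that an $n$-banded surface has boundary with component count congruent to $n+1 \pmod 2$.
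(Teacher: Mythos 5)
Your proposal is correct and matches the paper exactly: the paper offers no separate argument for Theorem~\ref{genusthm1}, deriving it immediately from the sandwich $2g(L)+l-1 \le B(L) \le 2g_c(L)+l-1$ of Lemma~\ref{genuslem1}, which collapses under the hypothesis $g(L)=g_c(L)$. Your accounting of where the two bounds come from (the constructive upper bound via Theorem~\ref{stringthm2} applied to a canonical Seifert surface realising $g_c(L)$, and the lower bound from the fact that an $n$-banded surface is a connected orientable Seifert surface of Euler characteristic $1-n$) is likewise the same reasoning the paper uses in proving that lemma, stated if anything more cleanly.
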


There are many links with coincident genera and canonical
genera such as alternating links and closures of positive
braids. The band index of such links can be found by
Corollary~\ref{genusthm1}. An example is the class of pretzel links
illustrated in Figure~\ref{npretzel}. The genera of pretzel links are
known~\cite{Gabai:genera} and their
genera and canonical genera have been shown to be the same~\cite{KL:pretzel}.
Therefore, the band index for
the following pretzel knots can be found.

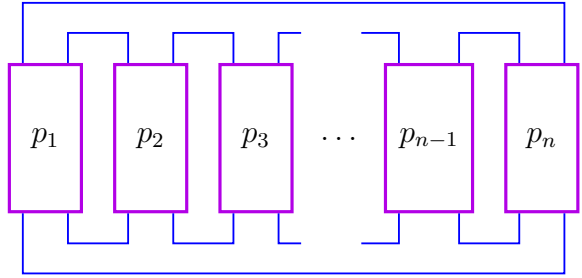
\begin{figure}
$$
\begin{pspicture}[shift=-.9](-4.5,-2)(3.1,2)
\psframe[linecolor=pup, linewidth=1.2pt](-4.2,-1)(-3.2,1) \rput(-3.7,0){$p_1$}
\psframe[linecolor=pup, linewidth=1.2pt](-2.8,-1)(-1.8,1) \rput(-2.3,0){$p_2$}
\psframe[linecolor=pup, linewidth=1.2pt](-1.4,-1)(-.4,1) \rput(-.9,0){$p_3$}
\rput(.2,0){$\ldots$} \psframe[linecolor=pup, linewidth=1.2pt](.8,-1)(2,1)
\rput(1.4,0){$p_{n-1}$} \psframe[linecolor=pup, linewidth=1.2pt](2.4,-1)(3.4,1)
\rput(2.9,0){$p_{n}$}
\psline(-4,1)(-4,1.8)(3.2,1.8)(3.2,1)
\psline(-3.4,1)(-3.4,1.4)(-2.6,1.4)(-2.6,1)
\psline(-2,1)(-2,1.4)(-1.2,1.4)(-1.2,1)
\psline(-.6,1)(-.6,1.4)(-.3,1.4) \psline(.5,1.4)(1,1.4)(1,1)
\psline(1.8,1)(1.8,1.4)(2.6,1.4)(2.6,1)
\psline(-4,-1)(-4,-1.8)(3.2,-1.8)(3.2,-1)
\psline(-3.4,-1)(-3.4,-1.4)(-2.6,-1.4)(-2.6,-1)
\psline(-2,-1)(-2,-1.4)(-1.2,-1.4)(-1.2,-1)
\psline(-.6,-1)(-.6,-1.4)(-.3,-1.4) \psline(.5,-1.4)(1,-1.4)(1,-1)
\psline(1.8,-1)(1.8,-1.4)(2.6,-1.4)(2.6,-1)
\end{pspicture}
$$
\caption{An $n$-pretzel link $L(p_1, p_2, \ldots , p_n)$}
\label{npretzel}
\end{figure}

\begin{cor}
Let $K(p_1,o_2,o_3, \ldots, o_n)$ be an $n$-pretzel knot with one
even $p_1$. Let $\alpha$ $= \sum_{i=2}^{n}$ $ sign(o_i)$ and $\beta
$$= sign(p_1)$. Suppose $|p_1|, |o_i| \ge 2$. Let
$$\delta=\sum_{i=2}^{n}(|o_{i}|-1).$$
Then the band index $B(K)$ of $K$,
$$B(K) = \left\{
\begin{array}{cl}
\delta+2  & ~~\mathrm{if}~ n~
\mathrm{is}~\mathrm{odd}~\mathrm{and}~ \alpha \neq 0, \\
 \delta & ~~\mathrm{if}~ n~
\mathrm{is}~\mathrm{even}~\mathrm{and}~
 \alpha = 0, \\
 |p_1|+\delta & ~~\mathrm{if}~ n
~\mathrm{is}~\mathrm{even}~\mathrm{and}~\alpha + \beta \neq 0, \\
|p_1|+\delta-2
  & ~~\mathrm{if}~ n ~\mathrm{is}~\mathrm{even}~\mathrm{and}~
 \alpha + \beta = 0.
\end{array} \right.
$$
 \label{genuscor1}
\end{cor}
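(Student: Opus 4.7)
The plan is to reduce the corollary to a genus computation via Theorem~\ref{genusthm1}. Under the standing hypotheses (only $p_1$ is even, the remaining $o_i$ are all odd, and $|p_1|, |o_i| \ge 2$), a direct trace of the standard pretzel diagram shows that the induced permutation on the $2n$ tangle endpoints is a single cycle, so $K=K(p_1, o_2, \ldots, o_n)$ is a genuine knot and $l = 1$. By \cite{KL:pretzel} the canonical genus agrees with the genus for this family of pretzel knots, so $g(K) = g_c(K)$ and Theorem~\ref{genusthm1} yields
\[ B(K) = 2g(K) + l - 1 = 2g(K). \]
It therefore suffices to compute $2g(K) = c(\F) - s(\F)$ for the canonical Seifert surface $\F$ coming from the standard pretzel diagram, case by case.

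Since $c(\F) = |p_1| + \sum_{i=2}^{n}|o_i| = |p_1| + \delta + (n-1)$, the four stated values of $B(K)$ are equivalent to the Seifert-circle counts $s(\F) = |p_1|+n-2,\ |p_1|+n,\ n,\ n+2$ in the four cases respectively. To obtain these counts, I would fix an orientation on $K$ and classify each twist region as parallel (both strands traversed in the same direction) or antiparallel with respect to that orientation: a parallel region smooths to two straight passes and contributes no internal Seifert circle, while an antiparallel region of $|p|$ crossings contributes internal circles together with attachment arcs at its two sides. The signs $\operatorname{sign}(o_i)$ and $\operatorname{sign}(p_1)$, together with the parity of $n$, determine the type of each region; the extreme conditions $\alpha = 0$ and $\alpha + \beta = 0$ single out configurations where sign cancellations shift $s(\F)$ by $\pm 2$ from the generic count, producing the four formulas.

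The main obstacle is the systematic bookkeeping of Seifert circles as a function of the sign pattern. I would organize it by propagating the orientation of $K$ through each tangle using the swap/no-swap rule determined by the parity of the twist parameter, then use the induced graph $G(K)$ of Theorem~\ref{stringthm2} to tally the vertex count in each case. Cross-checking with Gabai's genus formula for pretzel links \cite{Gabai:genera} confirms that $c(\F) - s(\F)$ equals the stated $B(K)$ in every case, completing the argument.
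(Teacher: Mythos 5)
Your proposal follows essentially the same route as the paper, which derives the corollary directly from Theorem~\ref{genusthm1} using $l=1$, the equality $g(K)=g_c(K)$ for pretzel knots from~\cite{KL:pretzel}, and Gabai's genus computations~\cite{Gabai:genera}; the paper explicitly omits the case analysis as a ``simple consequence,'' and your Seifert-circle bookkeeping fills in exactly that omitted verification. One small slip: for a knot the canonical surface satisfies $2g(K)=c(\F)-s(\F)+1$ (since $\chi=s(\F)-c(\F)$ and the boundary has one component), not $c(\F)-s(\F)$, though your four stated values $s(\F)=|p_1|+n-2,\ |p_1|+n,\ n,\ n+2$ are in fact the ones consistent with the corrected relation, so the argument goes through once that constant is fixed.
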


The band indices of pretzel links of different shapes can be similarly found but they are omitted here since they are
simple consequences
of Theorem~\ref{genusthm1}.

\section*{Acknowledgments}
The authors would like to thank Tsuyoshi Kobayashi for
helpful discussions and Lee Rudolph for his comments. The referee's keen observations, which
led to Sections~\ref{string} and \ref{smallindex} taking their current form and provided the fundamentals of the proofs of  these sections' lemmas and theorems, are thankfully acknowledged. The \TeX\, macro package
PSTricks~\cite{PSTricks} was essential for typesetting the equations
and figures. The first author was supported by Basic Science Research Program through the National Research Foundation of Korea(NRF) funded by the Ministry of Education, Science and Technology(2012R1A1A2006225).

\end{document}